\def\N{\mathbb{N}}
\def\C{\mathbb{C}}
\def\cO{\mathcal{O}}
\newcommand{\ta}{\theta}
\newcommand{\E}{\mathbb E}
\newtheorem{theorem}{Theorem}[section]
\newtheorem{lemma}[theorem]{Lemma}
\newtheorem{proposition}[theorem]{Proposition}
\newtheorem{corollary}[theorem]{Corollary}
\theoremstyle{definition}
\newtheorem{definition}[theorem]{Definition}
\theoremstyle{remark}
\newtheorem{remark}[theorem]{Remark}
\numberwithin{equation}{section}
\begin{document}

\title[Extensions of the Chaundy--Bullard identity]
{On an identity of Chaundy and Bullard.\\
  III. Basic and elliptic extensions}
\author{N.~Hoshi}
\address{Department of Physics,
Faculty of Science and Engineering,
Chuo University, 
Kasuga, Bunkyo-ku, Tokyo 112-8551, Japan}
\email{hoshi@phys.chuo-u.ac.jp}

\author{M.~Katori}
\address{Department of Physics,
Faculty of Science and Engineering,
Chuo University,
Kasuga, Bunkyo-ku, Tokyo 112-8551, Japan}
\email{katori@phys.chuo-u.ac.jp}
\thanks{The first two authors' research was
supported by the Grant-in-Aid for Scientific Research
(C)(No.19K03674), (B)(No.23H01077), (A)(No.21H04432), 
and the Grand-in-Aid for Transformative Research Areas 
(A)(No.22H05105) 
of the Japan Society for the Promotion of Science.}

\author{T.~H.~Koornwinder}
\address{Korteweg--de Vries Institute,
University of Amsterdam,
P. O. Box 94248, 
1090 GE Amsterdam, The Netherlands}
\email{T.H.Koornwinder@uva.nl}

\author{M.~J.~Schlosser}
\address{Fakult\"{a}t f\"{u}r Mathematik,
  Universit\"{a}t Wien,
Oskar-Morgenstern-Platz 1, A-1090 Wien, Austria}
\email{michael.schlosser@univie.ac.at}

\thanks{The fourth author's research was supported by the
Austrian Science Fund, grant
\texttt{10.55776/P32305}.}

\subjclass[2010] {Primary 05A19;
  Secondary 05A10, 05A30, 05C22, 05C81, 11B65, 33D15, 33E05}

\dedicatory{This paper is dedicated to the memory of Richard A.\ Askey}

\keywords {Chaundy--Bullard identity, $q$-analogue, elliptic analogue,
 binomial identities, theta functions, lattice paths, B\'ezout identity}

\begin{abstract}
The identity by Chaundy and Bullard
expresses $1$ as a sum of two
truncated binomial series in one variable 
where the truncations depend on two different
non-negative integers.
We present basic and elliptic extensions of the
Chaundy--Bullard identity.
The most general result, the elliptic extension,
involves, in addition to the nome $p$ and the base $q$,
four independent complex variables.
Our proof uses a suitable weighted lattice path model.
We also show how three of the basic extensions can
be viewed as B\'ezout identities.
Inspired by the lattice path model,
we give a new elliptic extension of the binomial theorem,
taking the form of an identity for elliptic commuting variables.
We further present variants of the homogeneous form of the
identity for $q$-commuting and for elliptic commuting variables.
%
%


\end{abstract}
\maketitle

\section{Introduction} \label{sec:introduction}

In earlier papers by two of the present authors
\cite{KS08,KS13}
the following identity was analyzed in great detail, 
\begin{equation}
1=(1-x)^{n+1} \sum_{k=0}^{m}
\binom{n+k}{k} x^k
+x^{m+1} \sum_{k=0}^n 
\binom{m+k}{k} (1-x)^k,
\label{CB1}
\end{equation}
where $m, n \in \N_0 :=\{0,1,2,\dots\}$
and $x$ is a variable. 
The authors \cite{KS08} originally attributed this formula to
Chaundy and Bullard \cite{CB60}. As the present paper
constitutes a continuation of the papers \cite{KS08,KS13},
we shall keep referring to \eqref{CB1} as the
\textit{Chaundy--Bullard identity}.
(In the follow-up paper \cite{KS13} to \cite{KS08}
the authors pointed out that an identity
equivalent to \eqref{CB1} was already published by
Pierre Raymond de Montmort in 1713 \cite{M1713},
in one of the very first treatises on probability theory.)
This fundamental formula, expressing $1$ 
as a sum of two truncated binomial series, was rediscovered
many times over more than three hundred years. 
A lot of history and occurrences of this identity in
various areas of mathematics were surveyed in \cite{KS08,KS13},
where several different proofs were compiled
and connections to special functions including
Gau{\ss} hypergeometric series, incomplete beta
integrals and Krawtchouk polynomials were made
explicit\footnote{After seeing \cite{KS08,KS13},
  Slobodan Damjanovic kindly brought to the authors' attention
  that almost at the same time as Chaundy and Bullard
  published their paper \cite{CB60}, Kesava~Menon gave the identity
  \eqref{CB1} in \cite[Equation~(1.2)]{KM61} as well.
  His proof uses partial fraction decomposition of $x^{-m}(1-x)^{-n}$.}.
One of the topics discussed in \cite{KS08} concerns
a multivariate extension of the Chaundy--Bullard
identity related to Lauricella hypergeometric functions,
with an explicit description of the corresponding system of
partial differential equations they satisfy.

One of the main purposes of the present paper is to present
an \textit{elliptic extension of the Chaundy--Bullard identity}.
Its proof (using a suitable lattice path model)
leads us even to discover a new elliptic extension of the binomial theorem.

In order to formulate our theorems,
we introduce some notations. 
First we fix $q \in \C$,
which we call a \textit{base}, and 
for $x \in \C$ and $k \in \N_0$ define
the \textit{$q$-shifted factorials} $(x;q)_k$
(also known as \textit{$q$-Pochhammer symbols}) by
\begin{equation}
  (x; q)_0:= 1\quad\;\text{and}\quad(x; q)_k:=
  \prod_{\ell=0}^{k-1}(1-x q^{\ell}),\quad k=1,2,\dots\;.
\label{q_Poch1}
\end{equation}
For $|q|<1$ we may also take $k=\infty$ in \eqref{q_Poch1};
the $q$-shifted factorial is then a convergent infinite product.
For $x_1, \dots, x_s \in \C$ 
products of $q$-shifted factorials are abbreviated as
$(x_1, \dots, x_s; q)_k
:=\prod_{i=1}^s (x_i; q)_k$, where $x_1, \dots, x_s \in \C$ and
$k \in \N_0\cup\{\infty\}$.
Next we fix another parameter $p \in \C$ with $0<|p| <1$.
The \textit{modified Jacobi theta function} with argument 
$x \in \C^{\times} := \C \setminus \{0\}$ and \textit{nome} $p$ 
is defined by
\begin{equation}
\theta(x;p)
:= (x, p/x; p)_{\infty}.
\label{theta1}
\end{equation}
We will frequently abbreviate products of modified Jacobi
theta functions using the notation
$\theta(x_1, \dots, x_s; p)
=\prod_{i=1}^s \theta(x_i; p)$ for
$x_1, \dots, x_s \in \C^{\times}$.
For $x \in \C^{\times}$ and  $k \in \N_0$,
the \textit{theta-shifted factorial} $(x;q,p)_k$ (also called the
\textit{$q,p$-shifted factorial}) is defined by
\begin{equation*}
(x; q, p)_0:=1\quad\;\text{and}\quad(x; q, p)_k:=
\prod_{\ell=0}^{k-1} \theta(x q^{\ell}; p),\quad k=1,2,\dots\;.
\end{equation*}
A product of theta-shifted factorials
is compactly written as
$(x_1, \dots, x_s; q,p)_k
:= \prod_{i=1}^s (x_i; q, p)_k$,
where $x_1, \dots, x_s \in \C^{\times}$, $k \in \N_0$.

Three simple identities satisfied by the modified
Jacobi theta function are the symmetry
$$
\theta(x;p)=\theta(p/x;p),
$$
the inversion formula
$$
\theta(1/x;p)=-\frac 1x\theta(x;p),
$$
and the quasi-periodicity
$$
\theta(px;p)=-\frac 1x\theta(x;p),
$$
which all easily follow from the definition in \eqref{theta1}.
In this paper we will make crucial use of the following identity, the 
\textit{Weierstra{\ss}--Riemann addition formula}
\cite[p.~451, Example~5]{WW62}
\begin{equation}
\theta(xy,x/y,uv,u/v;p)-\theta(xv,x/v,uy,u/y;p)
=\frac{u}{y}\theta(yv,y/v,xu,x/u;p),
\label{Weierstrass_Riemann}
\end{equation}
where $x, y, u, v \in \C^{\times}$. For a discussion of
\eqref{Weierstrass_Riemann} and a comparison with similar
relations, see \cite{Koo14}.

Whereas we refer to the extensions of the Chaundy--Bullard identity
that involve $q$-shifted factorials as \textit{$q$-extensions}
or \textit{basic extensions} (with $q$ being the base)
of the Chaundy--Bullard identity,
our \textit{elliptic extension} of the Chaundy--Bullard identity
connects series containing ratios of products
of modified Jacobi theta functions
(these ratios are in fact \textit{elliptic functions}, justifying
the terminology). These basic and elliptic extensions
actually involve truncated basic and elliptic hypergeometric series
(the text book \cite{GR04} contains a comprehensive treatise of the
theory of these series; the elliptic cases is treated in Chapter 11).
In Section~\ref{sec:proof} all our extensions of the Chaundy--Bullard
identity are obtained from scratch,
without requiring results from the theories of
basic or elliptic hypergeometric series.

By definition, a function $g(u)$ is {\em elliptic}, if it is
a doubly-periodic meromorphic function of the complex variable $u$.

As a consequence of the theory of theta functions
(cf.\ \cite[Theorem~1.3.3]{R21}) one may assume without
loss of generality that
\begin{equation*}
g(u)=\frac{\ta(a_1q^u,a_2q^u,\dots,a_sq^u;p)}
{\ta(b_1q^u,b_2q^u,\dots,b_sq^u;p)}\,z
\end{equation*}
for a positive integer $s$, a constant $z$ and some
$a_1,a_2,\dots,a_s$, $b_1,\dots,b_s$, and $p,q$ with $|p|<1$,
where the {\em elliptic balancing condition} (cf.\ \cite{Sp02}), namely
\begin{equation*}
a_1a_2\cdots a_s=b_1b_2\cdots b_s,
\end{equation*}
holds. (So, a linear combintation of two such expressions is again
an expression of the same form.)
If one writes $q=e^{2\pi\sqrt{-1}\sigma}$, $p=e^{2\pi\sqrt{-1}\tau}$,
with complex $\sigma$ and $\tau$ and $\Im\tau>0$,
then $g(u)$ is indeed periodic in $u$
with periods $\sigma^{-1}$ and $\tau\sigma^{-1}$.
Keeping this notation for $p$ and $q$,
denote the {\em field of elliptic functions} over $\C$
of the complex variable $u$, meromorphic in $u$
with the two periods $\sigma^{-1}$ and $\tau\sigma^{-1}$ by
$\E_{q^u;q,p}$.

More generally, denote the {\em field of totally elliptic multivariate
functions} over $\C$ of the complex variables $u_1,\dots,u_n$,
meromorphic in each variable\footnote{Here we would like to mention
Hartogs' theorem and its analogue for meromorphic functions.
Hartogs' theorem says informally that a multivariate function that
is separably analytic (i.e., analytic in each independent variable)
is analytic. Next, a separably meromorphic function is
meromorphic, see \cite[Corollary~2]{Sh86}.}
with equal periods,
$\sigma^{-1}$ and $\tau\sigma^{-1}$, of double periodicity, by
$\E_{q^{u_1},\dots,q^{u_n};q,p}$.
The notion of totally elliptic multivariate functions
was first introduced by Spiridonov, see
\cite[p.~317, Definition~11]{Sp02} (where the related
notion of totally elliptic hypergeometric series was defined)
and \cite[Definition~6]{Sp11}.

We are ready to state our elliptic extension of the
Chaundy--Bullard identity.

\begin{theorem}
\label{thm:elliptic_CB}
Let $a, b, c, x, q \in \C^{\times}$, $p\in\C$ with $0<|p|<1$
and $n,m\in\N_0$. Then, as an identity in $\E_{x,a,b,c;q,p}$,
\begin{align}
\label{CB_elliptic1}
  1 &=\frac{(ac, c/a, bx, b/x; q, p)_{n+1}}{(ab, b/a, cx, c/x; q, p)_{n+1}}\\*
  &\quad\;\times
\sum_{k=0}^{m}
\frac{\theta(acq^{n+2k}; p)
(acq^n, bcq^n, c/b, q^{n+1}, a x, a/x; q, p)_k}
{\theta(acq^n; p)
(q, aq/b, abq^{1+n}, ac, cq^{n+1}/x, cxq^{n+1}; q, p)_k} 
q^k
\nonumber\\
    &\;+ \frac{(bc, c/b, ax, a/x; q, p)_{m+1}}
      {(ab, a/b, cx, c/x; q, p)_{m+1}}
      \nonumber\\*
  &\quad\;\times
\sum_{k=0}^n 
\frac{\theta(bcq^{m+2k}; p)
(bcq^m, acq^m, c/a, q^{m+1}, b x, b/x; q, p)_k}
{\theta(bcq^m; p)
(q, bq/a, ab q^{1+m}, bc, cq^{m+1}/x, cxq^{m+1}; q, p)_k} 
q^k.\nonumber
\end{align}
\end{theorem}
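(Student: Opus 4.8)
The plan is to prove the identity via a weighted lattice path model, as the abstract promises. I would interpret each of the two sums on the right-hand side of \eqref{CB_elliptic1} as a generating function for a family of monotone lattice paths in a suitable rectangular region, with the paths in the first sum being those that exit through one side of the boundary and the paths in the second sum being those that exit through the complementary side. The weights assigned to the unit steps will be ratios of modified Jacobi theta functions depending on the coordinates of the step, chosen so that the product weight of a path telescopes appropriately. The identity then expresses the completeness statement that every path in the region must cross exactly one of the two complementary boundary segments; summing the total weight over all paths and recognizing that it equals the weight of the empty configuration gives the left-hand side $1$.

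**First I would** set up the precise lattice path model: fix the rectangle with corners determined by $m$ and $n$, and assign to the horizontal step from $(i,j)$ to $(i+1,j)$ and to the vertical step from $(i,j)$ to $(i,j+1)$ explicit theta-function weights involving $a,b,c,x,q,p$ — the exponents $q^{n+2k}$, $q^{m+2k}$ appearing in the statement signal that the natural weights are of Jackson/elliptic-hypergeometric type, so I would take them to match the summand ratios $\theta(acq^{n+2k};p)(\cdots)_k/\theta(acq^n;p)(\cdots)_k$ after identifying $k$ with a coordinate along the path. The crucial structural input is the Weierstra{\ss}--Riemann addition formula \eqref{Weierstrass_Riemann}: I would use it as the local ``path-splitting'' relation, rewriting the theta weight attached to a lattice point as a sum of two terms corresponding to the two ways a path can pass through (or turn at) that point, with the multiplicative factor $u/y$ in \eqref{Weierstrass_Riemann} absorbed into the step weights. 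Iterating this three-term relation across the rectangle — a telescoping or inductive collapse — reduces the double sum to a single boundary term.

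**The hard part will be** verifying that the local application of \eqref{Weierstrass_Riemann} is globally consistent: one must choose the four arguments $x,y,u,v$ of the addition formula as monomials in $a,b,c,x,q$ at each lattice point so that the factors introduced when splitting at one point exactly cancel against those introduced at neighboring points, leaving only the prefactors $(ac,c/a,bx,b/x;q,p)_{n+1}/(ab,b/a,cx,c/x;q,p)_{n+1}$ and its companion. This bookkeeping — tracking how the prefactor $q^k$ and the theta ratios $\theta(acq^{n+2k};p)/\theta(acq^n;p)$ emerge from products over the path, and confirming the balancing condition $a_1\cdots a_s = b_1\cdots b_s$ holds at each stage so that everything lives in $\E_{x,a,b,c;q,p}$ — is where the real work lies. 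I would organize it as an induction on $m+n$: the base case $m=n=0$ is \eqref{Weierstrass_Riemann} itself (with a suitable substitution, it reduces to $1 = \theta(\cdots)/\theta(\cdots) + \theta(\cdots)/\theta(\cdots)$), and the inductive step peels off one row or column of the rectangle, splitting the set of paths according to their first step and applying the addition formula once to recombine the two resulting sub-identities.

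**Finally**, once the elliptic identity is established, I would note that the basic ($p\to 0$) and classical ($q\to 1$) specializations, including the recovery of \eqref{CB1}, follow by routine limits, and that the verification that both sides are genuinely elliptic in each of $x,a,b,c$ — i.e.\ invariant under $x\mapsto px$, etc. — is a direct consequence of the quasi-periodicity $\theta(px;p) = -x^{-1}\theta(x;p)$ applied termwise, since the balancing of numerator and denominator arguments makes the quasi-periodicity factors cancel.
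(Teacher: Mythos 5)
Your proposal follows essentially the same route as the paper's proof: a weighted lattice path model on the $(m+1)\times(n+1)$ rectangle in which the east- and north-step weights at each vertex sum to $1$, with the Weierstra{\ss}--Riemann addition formula \eqref{Weierstrass_Riemann} supplying exactly that local splitting (Lemma~\ref{thm:h_symmetry}), and the two sums arising from classifying paths by which boundary segment they cross. The ``hard part'' you identify is resolved in the paper by the explicit weight $h_{x;a,b,c;q,p}(i,j)$ of \eqref{h_elliptic1} together with a closed-form evaluation of the path generating function (Lemma~\ref{thm:Belliptic}), proved by induction on $k+\ell$ with a second application of \eqref{Weierstrass_Riemann}, just as you anticipate.
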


In order to demonstrate that the identity \eqref{CB_elliptic1}
is an extension of the original Chaundy--Bullard
identity \eqref{CB1}, we 
show how \eqref{CB_elliptic1} can be reduced to \eqref{CB1}
by taking suitable limits.
Since our formula \eqref{CB_elliptic1} involves
three parameters $a,b,c \in \C^{\times}$
in addition to the variable $x$, the base $q$ and the nome $p$, 
we have several intermediate identities between
\eqref{CB1} and \eqref{CB_elliptic1}.
Let
\begin{align}\label{CB_elliptic0}
  &p_{m,n}(x; a,b,c; q, p):=
  \frac{(ac, c/a, bx, b/x; q, p)_{n+1}}{(ab, b/a, cx, c/x; q, p)_{n+1}}\\*
 \nonumber &\times
\sum_{k=0}^{m}
\frac{\theta(acq^{n+2k}; p)
(acq^n, bcq^n, c/b, q^{n+1}, a x, a/x; q, p)_k}
{\theta(acq^n; p)
(q, aq/b, abq^{1+n}, ac, cq^{n+1}/x, cxq^{n+1}; q, p)_k} 
q^k.
\end{align}
Then \eqref{CB_elliptic1} can be written as
\begin{equation}
1= p_{m, n}(x; a,b,c; q, p) + p_{n, m}(x; b,a,c;q,p). 
\label{CB_elliptic2}
\end{equation}
By the definition of the modified Jacobi theta function \eqref{theta1}
with \eqref{q_Poch1}, one has
$\lim_{p \to 0} \theta(x;p)=1-x$, 
and hence $\lim_{p \to 0} (x; q, p)_k=(x;q)_k$, $k \in \N_0$; 
that is the theta-shifted factorial is reduced to
the $q$-shifted factorial in the limit $p \to 0$.
Then the identity \eqref{CB_elliptic2} with $p=0$ holds with
\begin{align}\label{pmn_abcq}
&p_{m,n}(x; a,b,c;q,0):=
\frac{(ac, c/a, bx, b/x; q)_{n+1}}{(ab, b/a, cx, c/x; q)_{n+1}}\\*\nonumber
&\times\sum_{k=0}^{m}
\frac{(1-acq^{n+2k})
(acq^n, bcq^n, c/b, q^{n+1}, a x, a/x; q)_k}
{(1-acq^n)
(q, aq/b, abq^{1+n}, ac, cq^{n+1}/x, cxq^{n+1}; q)_k} 
q^k.
\end{align}
Next, if in addition to $p\to 0$ we take the further limit $c \to 0$, then 
we have the equality \eqref{CB_elliptic2} with $p=c=0$, in which
\begin{equation}
p_{m,n}(x; a,b,0;q,0)
:=\frac{(bx, b/x; q)_{n+1}}{(ab, b/a; q)_{n+1}} 
\sum_{k=0}^{m}
\frac{(q^{n+1}, a x, a/x; q)_k}{(q, aq/b, abq^{1+n}; q)_k} 
q^k.
\label{pmn_abq_2nd}
\end{equation}
An equivalent form of this $q$-extension of the Chaundy--Bullard
identity was obtained by Ma in \cite[Corollary~4.2]{M09}
as a consequence of his six-variable generalization of Ramanujan's
reciprocity theorem.

We can obtain another variant of a three-parametric
Chaundy--Bullard identity for $x \in \C$ with parameters $a,b,q$.
In \eqref{CB_elliptic2} with $p=c=0$ and \eqref{pmn_abq_2nd}, 
we make the substitution $a \mapsto \delta a$, $b \mapsto b \delta$,
$x \mapsto x/\delta$ and then take the limit
$\delta \to 0$.
The obtained equality is
\begin{equation}
1= \widetilde{p}_{m, n}(x; a, b; q)
+\widetilde{p}_{n,m}(x; b, a; q)
\label{CB_abq_1st}
\end{equation}
with
\begin{equation}
\widetilde{p}_{m,n}(x; a,b;q)
:=\frac{(bx;q)_{n+1}}{(b/a; q)_{n+1}} 
\sum_{k=0}^{m}
\frac{(q^{n+1}, a x; q)_k}{(q, aq/b; q)_k} 
q^k.
\label{pmn_abq_1st}
\end{equation}
In the above identity \eqref{CB_abq_1st} with \eqref{pmn_abq_1st}
the variable $b$ is actually redundant (the substitutions
$a\mapsto ab$ and $x\mapsto x/b$ applied to \eqref{pmn_abq_1st}
eliminate the variable $b$) but it is useful to keep the variable
$b$ for the $a\leftrightarrow b$ symmetry.

Furthermore, if we substitute $x \mapsto x/b$ 
in \eqref{CB_abq_1st} with \eqref{pmn_abq_1st} and then take the
limit $b \to 0$, the equality is reduced to
the identity
\begin{equation}
1=(x; q)_{n+1} \sum_{k=0}^{m}
\begin{bmatrix}n+k\\k \end{bmatrix}_{q}
x^k
+x^{m+1} \sum_{k=0}^n 
\begin{bmatrix}m+k\\k \end{bmatrix}_{q}
q^k (x; q)_k,
\label{CB_q1}
\end{equation}
where the \textit{$q$-binomial coefficient} is defined by
\begin{equation}
\begin{bmatrix}n\\k \end{bmatrix}_{q}
:= \frac{(q;q)_n}{(q;q)_k (q;q)_{n-k}},
\quad k \in \{0, 1, \dots, n \}.
\label{q_binom1}
\end{equation}
Finally, if we take the limit $q \to 1$, then 
\eqref{CB_q1} is reduced to
the original Chaundy--Bullard identity \eqref{CB1}.
We note that the $q \to 1$ limit of
the equality \eqref{CB_abq_1st} 
with \eqref{pmn_abq_1st} is equivalent to
the original Chaundy--Bullard identity \eqref{CB1}
where $x$ is replaced by $(1-ax)/(1-a/b)$,
in light of the easily checked relation
$1-(1-ax)/(1-a/b)=(1-bx)/(1-b/a)$.
To better distinguish the identities, we say that \eqref{CB_q1} is the
\textit{$q$-extension}, 
\eqref{CB_abq_1st} with \eqref{pmn_abq_1st} 
the \textit{$(a,b;q)$-extension of the first kind}, 
\eqref{CB_elliptic2} in the case $p=c=0$ with
\eqref{pmn_abq_2nd} 
the \textit{$(a,b;q)$-extension of the second kind}, 
\eqref{CB_elliptic2} in the case $p=0$ 
with \eqref{pmn_abcq} the \textit{$(a,b,c;q)$-extension},
and \eqref{CB_elliptic1}
the \textit{$(a,b;q,p)$-extension}, or simply
\textit{elliptic extension} of
the Chaundy--Bullard identity, respectively.
Summarizing the above linear scheme, we have the chain of objects

\centerline{\eqref{CB_elliptic0} $\to$ \eqref{pmn_abcq}
  $\to$ \eqref{pmn_abq_2nd} $\to$ \eqref{pmn_abq_1st} $\to$
  \eqref{CB_q1} $\to$ \eqref{CB1}}

\noindent which respectively contain the following free variables:

\centerline{($x,a,b,c;q,p$) $\to$ ($x,a,b,c;q$) $\to$
($x,a,b;q$) $\to$ ($x,a;q$) $\to$ ($x;q$) $\to$ ($x$).}

At this point we would also like to remark that the various $q$-extensions
of the Chaundy--Bullard identity give rise to parameter dependent
Chaundy--Bullard identities after replacing the parameters
$a$, $b$, and $c$, by $q^a$, $q^b$, and $q^c$, respectively,
and taking the limit $q\to 1$. (We leave the details to the reader.)

From the many different proofs that are
known for the original Chaundy--Bullard identity \eqref{CB1}
\cite{KS08,KS13}, in order to prove our elliptic extension
in Theorem~\ref{thm:elliptic_CB},
we develop the fifth proof given in \cite{KS08};
a proof by enumerating weighted lattice paths.
Our construction of a suitable lattice path model
with the proof of Theorem~\ref{thm:elliptic_CB}
is given in Section~\ref{sec:proof}.
In Section \ref{sec:Bezout},  we show how
the $q$-extension of the Chaundy--Bullard identity 
\eqref{CB_q1} and its two kinds of
$(a,b;q)$-extensions can be alternatively derived
by making use of B\'ezout's identity. 
In Section~\ref{sec:q_commuting} we look at variants
of the $q$-extended Chaundy--Bullard identity and
relate them to corresponding identities for
$q$-commuting variables.
Finally, in Section~\ref{sec:elliptic_binomial},
inspired by the lattice path model, we present 
a new elliptic binomial theorem, taking the form of
an identity for elliptic commuting variables.
The elliptic binomial theorem in Theorem~\ref{vwdbinth}
is similar to a result discovered earlier by one of the
authors \cite[Theorem~2]{Sch20} and can be regarded
as a companion result to that other one.
In the same section we also present variants
of the homogeneous form of the Chaundy--Bullard identity
for elliptic commuting variables.

\section{Proof of Theorem \ref{thm:elliptic_CB} 
by enumerating weighted lattice paths} 
\label{sec:proof}
\subsection{General arguments}
\label{sec:general}
Let $m, n \in \N_0$ and consider a rectangular region of the 
square lattice,
\[
\Lambda_{m+1, n+1}
:=\{(i, j) : i \in \{0, 1, \dots, m+1 \},
j \in \{0, 1, \dots, n+1 \} \}.
\]
Let $\Pi_{m+1, n+1}$ be the set of
all lattice paths from $(0,0)$ to $(m+1, n+1)$ in
$\Lambda_{m+1, n+1}$ using only
unit east steps $(i, j) \to (i+1,j)$
and unit north steps $(i, j) \to (i, j+1)$. 
Such a path $\pi$ consists of $m+n+2$ successive
unit steps,
$\pi = \{s_t(\pi), t \in \{1, 2, \dots, m+n+2\} \}$. 
Each step $s$ in $\pi \in \Pi_{m+1, n+1}$ is assigned
a weight $w(s)$.
The weight $w(\pi)$ of a path $\pi \in \Pi_{m+1, n+1}$ 
is defined to be a product of the weights of the respective steps
of the path:
\[
w(\pi):=\prod_{s \in \pi} w(s)
= \prod_{t=1}^{m+n+2} w(s_t(\pi)). 
\]

Next we specify the weights for each of the possible steps of paths
in $\Pi_{m+1, n+1}$.
Let $h:\Lambda_{m,n}\to\C$ be a function which we will specialize later.
For each $i \in \{0,1, \dots, m \}$
we define the weight of a unit east step by
\begin{subequations}\label{weight_general}
\begin{equation}
w((i,j) \to (i+1, j)):= 
\begin{cases}
h(i,j),
& \mbox{if $j \in \{0, 1, \dots, n \}$},
\cr
1,
& \mbox{if $j=n+1$},
\end{cases}
\end{equation}
and for each $j \in \{0,1, \dots, n \}$
the weight of a unit north step by
\begin{equation}
w((i,j) \to (i, j+1)):= 
\begin{cases}
1-h(i,j),
& \mbox{if $i \in \{0, 1, \dots, m \}$},
\cr
1,
& \mbox{if $i=m+1$}. 
\end{cases}
\end{equation}
\end{subequations}
We assume that
\begin{align}
\label{h_condition}
  h(i, 0) \not=0, \quad&\text{for}\quad i \in \{0,1,\dots, m-1\},
\quad\text{and}  \\*\nonumber
h(0, j) \not=1, \quad&\text{for}\quad j \in \{0,1,\dots, n-1\}.
\end{align}
For $\pi \in \Pi_{m+1, n+1}$ and $\tau \in \{0,1, 2, \dots, m+n+2\}$, 
define the truncated path $\pi_{\tau}$ 
by the path $\pi$ terminated after $\tau$ steps.
In particular, $\pi_0=\emptyset$ and $\pi_{m+n+2} = \pi$. 
By the specific choices of weights in \eqref{weight_general}, 
for each $\pi_{\tau-1}$,
$\tau \in \{1,2, \dots, m+n+2\}$, we have
$$
\sum_{\pi_{\tau} : \pi_{\tau} \setminus s_{\tau}(\pi_{\tau})=\pi_{\tau-1}}
w(s_{\tau}(\pi_{\tau}))=1.
$$
Hence by induction we can conclude that for each
$\kappa<\tau$, $\tau \in \{1,2, \dots, m+n+2\}$, we have
$$
\sum_{\pi_{\tau} : \pi_{\tau} \setminus \{s_{\kappa+1}(\pi_{\tau}),
  \ldots,s_{\tau}(\pi_{\tau})\}=\pi_\kappa}
\prod_{i=\kappa+1}^\tau w(s_i(\pi_i))=1.
$$
and by setting $\kappa=0$ obtain
$$
\sum_{\pi_{\tau}: \pi \in\Pi_{m+1, n+1} } w(\pi_{\tau})=1\quad
\text{for all}\quad \tau \in \{1, 2, \dots, m+n+2\}.
$$
In the case $\tau=m+n+2$, the above gives
\begin{equation}
\sum_{\pi \in \Pi_{m+1, n+1}} w(\pi)=1.
\label{sum1}
\end{equation}

For any $(k,\ell)\in\Lambda_{m,n}$, the generating function
$A(k,\ell)$ for all weighted lattice paths
from $(0,0)$ to $(k,\ell)$ is defined by $A(0,0) :=1$ and
\[
A(k, \ell) := \sum_{\pi : (0,0) \to (k, \ell)} w(\pi),
\quad \text{for}\quad (k, \ell) \in \Lambda_{m,n} \setminus \{(0,0)\}, 
\]
where the sum is taken over $\{\pi : (0,0) \to (k, \ell)\}$,
the set of all lattice paths 
from $(0,0)$ to $(k, \ell)$.
By the specific assignment of weights in \eqref{weight_general}, 
$A(k, \ell)$ satisfies the recurrence relation
\begin{subequations}
  \label{A3}
\begin{align}\label{A3a}
&h(k-1, \ell) A(k-1, \ell) 
+(1-h(k, \ell-1)) A(k, \ell-1) =A(k, \ell),\\*
  \nonumber
&\text{for}\quad k \in \{1,2, \dots, m \} \quad
\text{and}\quad \ell \in \{1,2, \dots, n\},
\end{align}
with the boundary conditions
\begin{alignat}{2}\label{A3b}
A(k, 0) &= \prod_{i=0}^{k-1} h(i, 0), 
\quad &&\text{for}\quad k \in \{1, 2, \dots, m\},\\*
\nonumber
A(0, \ell) &= \prod_{j=0}^{\ell-1} (1-h(0, j)),
\quad &&\text{for}\quad \ell \in \{1, 2, \dots, n \}.
\end{alignat}
\end{subequations}
Moreover, since the last step of a path in $\Pi_{m+1,n+1}$
which is not a step along the north or east boundary is either
a step $(k,n)\to(k,n+1)$ ($k=0,1,\ldots,m$) or a step
$(m,\ell)\to(m+1,\ell)$ ($\ell=0,\ldots,n$),
the above assignment of weights implies the equality
\[
\sum_{\pi \in \Pi_{m+1, n+1}} w(\pi)
=\sum_{k=0}^m (1-h(k, n)) A(k,n)  
+\sum_{\ell=0}^n h(m, \ell) A(m, \ell) .
\]
Hence by \eqref{sum1} we have the equality
\begin{equation}
1=\sum_{k=0}^m (1-h(k, n)) A(k,n)  
+\sum_{\ell=0}^n h(m, \ell) A(m, \ell).
\label{equality}
\end{equation}
Under assumption \eqref{h_condition}, 
we put 
\[
B(k, \ell):= \frac{A(k, \ell)}{A(k,0) A(0, \ell)},
\quad\text{for} \quad (k, \ell) \in \Lambda_{m,n}.
\]
Then we have from \eqref{A3} the following system of difference equations
\begin{align}\label{systemB}
&\frac{h(k-1, \ell)}{h(k-1,0)} 
B(k-1, \ell)
+ \frac{1-h(k, \ell-1)}{1-h(0, \ell-1)} 
B(k, \ell-1) =B(k, \ell),\\*
&\text{for}\quad k \in \{1, 2, \dots, m\} \quad\text{and}\quad
\ell \in \{1,2, \dots, n\}, 
\nonumber\\*
& B(k,0) =1, \quad \text{for}\quad k \in \{0,1, \dots, m\},
\nonumber\\*
& B(0, \ell) =1, \quad\text{for}\quad  \ell \in \{0,1, \dots, n\},
\nonumber
\end{align}
which conversely uniquely determines the sequence
$\big(B(k, \ell)\big)_{(k, \ell) \in \Lambda_{m,n}}$.

The above argument is summarized as follows.
\begin{proposition}
\label{thm:main_prop}
Assume that 
$\big(h(i,j)\big)_{(i,j) \in \Lambda_{m,n}}$ is given so that
\eqref{h_condition} is satisfied. 
Let $\big(B(k, \ell)\big)_{(k, \ell) \in \Lambda_{m, n}}$
be uniquely given by \eqref{systemB}. Define
$\big(A(k, \ell)\big)_{(k, \ell) \in \Lambda_{m,n}}$
by $A(0, 0) := 1$, \eqref{A3b} and 
\begin{align}\label{B_to_A}
&A(k, \ell) := A(k, 0) A(0, \ell) B(k, \ell),\\*\nonumber
&\text{for}\quad  k \in \{1,2, \dots, m \}
\quad \text{and}\quad \ell \in \{1,2, \dots, n \}. 
\end{align}
Then the equality \eqref{equality} holds.
\end{proposition}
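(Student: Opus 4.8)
The plan is to recognize Proposition~\ref{thm:main_prop} as a bookkeeping repackaging of the discussion preceding it: the only genuine content is to show that the array $\big(A(k,\ell)\big)$ reconstructed from $\big(B(k,\ell)\big)$ via \eqref{B_to_A} coincides with the weighted lattice path generating function, to which \eqref{equality} has already been shown to apply. So I would fix $h$ satisfying \eqref{h_condition}, run the path model with the weights \eqref{weight_general}, and write $\widehat A(k,\ell):=\sum_{\pi:(0,0)\to(k,\ell)}w(\pi)$. The text before the proposition already gives me, for this $h$: the normalization \eqref{sum1}; the last-step decomposition
\[
\sum_{\pi\in\Pi_{m+1,n+1}}w(\pi)=\sum_{k=0}^m(1-h(k,n))\widehat A(k,n)+\sum_{\ell=0}^n h(m,\ell)\widehat A(m,\ell);
\]
the fact that $\widehat A$ solves the system \eqref{A3}, which has a unique solution (the values are forced by induction on $k+\ell$, the boundary $k=0$ or $\ell=0$ being given by \eqref{A3b} and $A(0,0)=1$, the step using \eqref{A3a}); and, since \eqref{h_condition} makes $\widehat A(k,0)=\prod_{i=0}^{k-1}h(i,0)$ and $\widehat A(0,\ell)=\prod_{j=0}^{\ell-1}(1-h(0,j))$ nonzero, the fact that $\widehat B(k,\ell):=\widehat A(k,\ell)/(\widehat A(k,0)\widehat A(0,\ell))$ satisfies \eqref{systemB}.

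The identification would then be pure uniqueness. The system \eqref{systemB} also has a unique solution (the values propagate by the same induction, the denominators $h(k-1,0)$ and $1-h(0,\ell-1)$ being nonzero by \eqref{h_condition}), so the $B$ of the proposition equals $\widehat B$; consequently the $A$ of the proposition agrees with $\widehat A$ on the boundary of $\Lambda_{m,n}$ by \eqref{A3b} and $A(0,0)=1$, while for interior $(k,\ell)$ one has $A(k,\ell)=A(k,0)A(0,\ell)B(k,\ell)=\widehat A(k,0)\widehat A(0,\ell)\widehat B(k,\ell)=\widehat A(k,\ell)$. With $A=\widehat A$ throughout $\Lambda_{m,n}$ established, substituting into the displayed last-step decomposition and invoking \eqref{sum1} yields \eqref{equality}. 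Alternatively, and perhaps more transparently, I would verify by hand that the reconstructed $A$ satisfies \eqref{A3a}: multiplying the difference equation in \eqref{systemB} by $A(k,0)A(0,\ell)$ and invoking the telescoping relations $A(k,0)=h(k-1,0)A(k-1,0)$ and $A(0,\ell)=(1-h(0,\ell-1))A(0,\ell-1)$ (immediate from \eqref{A3b}), the left-hand side collapses to $h(k-1,\ell)A(k-1,\ell)+(1-h(k,\ell-1))A(k,\ell-1)$ and the right-hand side to $A(k,\ell)$, once one observes that \eqref{B_to_A} together with $B(k,0)=B(0,\ell)=1$ lets one write $A(k',\ell')=A(k',0)A(0,\ell')B(k',\ell')$ uniformly on all of $\Lambda_{m,n}$; together with \eqref{A3b} this shows $A$ solves \eqref{A3}, hence $A=\widehat A$ as before.

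I do not expect a serious obstacle; the difficulty is organizational rather than computational. The point needing the most care is logical: I must make sure that the chain ``\eqref{sum1} $\Rightarrow$ last-step decomposition $\Rightarrow$ \eqref{equality}'' from the preceding pages uses only the shape of the weights \eqref{weight_general} and the identification of $A$ with the path generating function, and no further property of $A$, so that it transfers verbatim once the abstractly defined $A$ has been shown to equal $\widehat A$. A secondary point is that, in the direct verification, the telescoping relations and the uniform form of \eqref{B_to_A} must correctly absorb the boundary cases $k-1=0$ and $\ell-1=0$; this is routine but should be spelled out. Once these are in hand, the equivalence of the two difference systems under the dictionary \eqref{B_to_A} closes the argument.
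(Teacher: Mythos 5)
Your proposal is correct and follows essentially the same route as the paper: the paper's ``proof'' of Proposition~\ref{thm:main_prop} is literally the preceding derivation (the telescoping argument for \eqref{sum1}, the last-step decomposition over entries into the north/east boundary, and the observation that the path generating function satisfies \eqref{A3} and hence its normalization satisfies \eqref{systemB}), with the proposition obtained by running the dictionary \eqref{B_to_A} backwards via uniqueness. Your explicit uniqueness bookkeeping and the alternative direct verification of \eqref{A3a} are just careful spellings-out of what the paper leaves implicit.
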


\subsection{Proof of \eqref{CB_elliptic1}}
\label{sec:proof_elliptic_ext}

We choose the weight function $h(i,j)$ as 
\begin{equation}
h(i,j)= h_{x;a,b,c;q,p}(i, j)
:=\frac{\theta(bcq^{i+2j},(c/b)q^i,ax q^i,(a/x)q^i;p)}
{\theta(abq^{i+j},(a/b) q^{i-j},cxq^{i+j},(c/x)q^{i+j};p)},
\label{h_elliptic1}
\end{equation}
where $a,b,c,x,q,p\in\C^{\times}$ and $|p|<1$.
This choice is motivated by the following symmetry relation.

\begin{lemma}
\label{thm:h_symmetry}
The following equality holds,
\begin{equation}
1- h_{x;a,b,c;q,p}(i, j)= h_{x;b,a,c;q,p}(j,i).
\label{h_symmetry}
\end{equation}
\end{lemma}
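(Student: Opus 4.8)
The plan is to deduce \eqref{h_symmetry} from a single, well-chosen instance of the Weierstra{\ss}--Riemann addition formula \eqref{Weierstrass_Riemann}.

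First I would clear denominators. Write $D:=\theta(abq^{i+j},(a/b)q^{i-j},cxq^{i+j},(c/x)q^{i+j};p)$ for the denominator of $h_{x;a,b,c;q,p}(i,j)$, and note, from the definition \eqref{h_elliptic1}, that the denominator of $h_{x;b,a,c;q,p}(j,i)$ is $\theta(abq^{i+j},(b/a)q^{j-i},cxq^{i+j},(c/x)q^{i+j};p)$, which by the inversion formula $\theta(1/z;p)=-z^{-1}\theta(z;p)$, applied to the single factor that differs, equals $-(b/a)q^{j-i}D$. Writing $N:=\theta(bcq^{i+2j},(c/b)q^i,axq^i,(a/x)q^i;p)$ and $N':=\theta(acq^{2i+j},(c/a)q^j,bxq^j,(b/x)q^j;p)$ for the two numerators, the asserted identity then becomes $1-N/D=-(a/b)q^{i-j}N'/D$, i.e.\ the theta-function identity
\[
D-N=-\frac{a}{b}\,q^{i-j}\,N'.
\]

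Next I would obtain this identity by substituting $(x,y,u,v)\mapsto(aq^i,\,bq^j,\,cq^{i+j},\,x)$ into \eqref{Weierstrass_Riemann}, where on the right-hand side $x$ denotes the variable occurring in the lemma. A direct check of the eight arguments shows that the first theta product on the left of \eqref{Weierstrass_Riemann} becomes exactly $D$, the second becomes exactly $N$, and the right-hand side becomes $(c/b)q^i\,\theta(bxq^j,(b/x)q^j,acq^{2i+j},(a/c)q^{-j};p)$; a final application of the inversion formula to the last factor, $\theta((a/c)q^{-j};p)=-(a/c)q^{-j}\,\theta((c/a)q^j;p)$, turns the right-hand side into $-(a/b)q^{i-j}N'$. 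This is precisely the displayed theta identity, so undoing the reductions above yields \eqref{h_symmetry}.

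I do not expect a genuine obstacle: the argument is an exact matching of arguments, and the only delicate points are the two bookkeeping uses of the inversion formula — once to see that the two denominators agree up to the scalar $-(b/a)q^{j-i}$, and once to rewrite $\theta((a/c)q^{-j};p)$. Even the choice of substitution is forced rather than guessed: matching $D$ with the first theta product $\theta(xy,x/y,uv,u/v;p)$ of \eqref{Weierstrass_Riemann} requires the relevant square products $(xy)(x/y)$ and $(uv)(u/v)$ to equal $a^2q^{2i}$ and $c^2q^{2(i+j)}$, which forces the Weierstra{\ss}--Riemann variables $x,y,u,v$ to be taken as $aq^i$, $bq^j$, $cq^{i+j}$ and the lemma's variable $x$, respectively. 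As an independent sanity check one can let $p\to0$, where $h$ degenerates to a rational function and \eqref{h_symmetry} becomes a one-line partial-fraction identity.
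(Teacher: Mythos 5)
Your proof is correct and follows essentially the same route as the paper: both reduce \eqref{h_symmetry} to a single instance of the Weierstra{\ss}--Riemann addition formula \eqref{Weierstrass_Riemann}, with the theta inversion formula used for bookkeeping. The only difference is the choice of substitution --- the paper uses $(x,y,u,v)\mapsto(cq^{i+j},aq^i,bq^j,x)$ and applies the inversion once to the denominator, while you use $(aq^i,bq^j,cq^{i+j},x)$ and apply it twice --- and these are equivalent up to the symmetries of \eqref{Weierstrass_Riemann}.
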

\begin{proof}
By \eqref{h_elliptic1}, 
\begin{align*}
&1- h_{x;a,b,c;q,p}(i, j) =
1-\frac{\theta(bcq^{i+2j},(c/b)q^i,ax q^i,(a/x)q^i;p)}
{\theta(abq^{i+j},(a/b) q^{i-j},cxq^{i+j},(c/x)q^{i+j};p)}\\
&=\Big(\theta(abq^{i+j},(b/a) q^{j-i},cxq^{i+j},(c/x)q^{i+j};p)\\*
&\qquad+(b/a)q^{j-i}\theta(bcq^{i+2j},(c/b)q^i,ax q^i,(a/x)q^i;p)\Big)\\*
&\quad\times\theta(abq^{i+j},(b/a) q^{j-i},cxq^{i+j},(c/x)q^{i+j};p)^{-1}.
\end{align*}
Now, with the substitution of variables
$(x,y,u,v)\mapsto(cq^{i+j},aq^i,bq^j,x)$
in the Weierstra{\ss}--Riemann addition formula \eqref{Weierstrass_Riemann},
specifically
\begin{align*}
\theta(abq^{i+j},(b/a) q^{j-i},cxq^{i+j},(c/x)q^{i+j};p)
+\frac{b}{a}q^{j-i}\theta(bcq^{i+2j},(c/b)q^i,ax q^i,(a/x)q^i;p)&\\*
=\theta(acq^{2i+j},(c/a)q^j,bx q^j,(b/x)q^j;p)&,
\end{align*}
the relation 
\[
1- h_{x;a,b,c;q,p}(i, j) 
=\frac{\theta(acq^{2i+j},(c/a)q^j,bx q^j,(b/x)q^j;p)}
{\theta(abq^{i+j},(b/a) q^{j-i},cxq^{i+j},(c/x)q^{i+j};p)}
\]
is established as desired and the proof is complete.
\end{proof}

We would like to explain the motivation for our specific choice of the
weight function \eqref{h_elliptic1}.
Our weighted lattice model has a natural companion obtained by
reflection with respect to the diagonal going northeast form the origin.
Then according to \eqref{weight_general}
the corresponding weights $\tilde w$ of this companion are in terms of,
say, $\tilde h(i,j):=1-h(j,i)$.
So it would be pleasant to have a nice expression for $h$ such that
$\tilde h$ also has a nice expression. Now observe that
\eqref{Weierstrass_Riemann} can be rewritten as
$1-h_{x;a,b,c;q,p}(0,0)=h_{x;b,a,c;q,p}(0,0)$ with $h_{x;a,b,c;q,p}(0,0)$
given by \eqref{h_elliptic1} (which does not yet dependent on $q$).
This gives a motivation for defining
$h_{x;a,b,c;q,p}(i,j):=h_{x;aq^i,bq^j,cq^{i+j};q,p}(0,0)$ so that we have
$\tilde h_{x;a,b,c;q,p}(i,j)=1-h_{x;b,a,c;q,p}(i,j)$.

\smallskip
With $h(i,j)$ as given as in \eqref{h_elliptic1} the system
\eqref{systemB} can be explicitly written as
\begin{align}\label{system_elliptic}
& \frac{\theta(bcq^{k+2\ell-1},abq^{k-1},(a/b) q^{k-1},cxq^{k-1},
(c/x)q^{k-1};p)}
{\theta(abq^{k+\ell-1},(a/b)q^{k-\ell-1},cxq^{k+\ell-1},
(c/x)q^{k+\ell-1},bcq^{k-1};p)} B(k-1, \ell) \\*
&+ \frac{\theta(acq^{2k+\ell-1},abq^{\ell-1},(b/a)q^{\ell-1},
cxq^{\ell-1},(c/x)q^{\ell-1};p)}
{\theta(abq^{k+\ell-1},(b/a)q^{\ell-k-1},cxq^{k+\ell-1},
(c/x)q^{k+\ell-1},acq^{\ell-1};p)}
B(k, \ell-1)\nonumber\\*&=B(k, \ell),
\nonumber\\*
& \text{for}\quad k \in \{1, 2, \dots, m\}\quad\text{and}
\quad \ell \in \{1, 2, \dots, n \},
\nonumber
\\*
& B(k, 0) = 1, 
\quad\text{for}\quad k \in \{0, 1, \dots, m\}, 
\nonumber\\*
& B(0, \ell) = 1,
\quad\text{for}\quad \ell \in \{0, 1, \dots, n \}.
\nonumber
\end{align}

\begin{lemma}
\label{thm:Belliptic}
The unique solution of \eqref{system_elliptic}
is given by
\begin{equation}
B(k, \ell) = 
\frac{\theta((a/b)q^{k-\ell},b/a;p)(bcq^{\ell}; q, p)_k
(acq^k, ab, cx, c/x, q^{k+1}; q, p)_{\ell}}
{\theta((a/b)q^k,(b/a)q^{\ell};p)(bc;q,p)_k
(ac, abq^k, cxq^k, (c/x)q^k, q; q, p)_{\ell}} q^{\ell},
\label{ellipticB}
\end{equation}
for all $(k, \ell) \in \Lambda_{m,n}$.
\end{lemma}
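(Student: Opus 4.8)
The plan is to verify directly that the right-hand side of \eqref{ellipticB} satisfies the three requirements listed in \eqref{system_elliptic}: the two boundary conditions $B(k,0)=1$ and $B(0,\ell)=1$, and the first-order two-term recurrence, since by the discussion following \eqref{systemB} these conditions determine the array $\big(B(k,\ell)\big)_{(k,\ell)\in\Lambda_{m,n}}$ uniquely. The boundary checks are immediate: setting $\ell=0$ in \eqref{ellipticB} collapses every theta-shifted factorial with subscript $\ell$ to $1$ and turns the theta-quotient prefactor into $\theta((a/b)q^k,b/a;p)/\big(\theta((a/b)q^k,b/a;p)\big)=1$; setting $k=0$ similarly collapses the subscript-$k$ factorials and the prefactor. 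So the real content is the recurrence.

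For the recurrence, I would substitute the proposed closed form \eqref{ellipticB} for $B(k,\ell)$, $B(k-1,\ell)$ and $B(k,\ell-1)$ into the left-hand side of \eqref{system_elliptic} and divide through by $B(k,\ell)$. Using the quotient
$$
\frac{B(k-1,\ell)}{B(k,\ell)}
=\frac{\theta((a/b)q^{k-1-\ell},(a/b)q^k;p)}
{\theta((a/b)q^{k-\ell},(a/b)q^{k-1};p)}\,
\frac{\theta(bcq^{\ell+k-1};p)^{-1}(ac q^{k-1};q,p)_\ell}{(acq^k;q,p)_\ell}\cdot(\text{ratio of }abq^k,cxq^k,(c/x)q^k\text{ factorials}),
$$
and the analogous (simpler) quotient for $B(k,\ell-1)/B(k,\ell)$, every theta-shifted factorial of length $k$ or $\ell$ telescopes down to a single theta factor, so that after clearing denominators the identity to be proved becomes a relation among a bounded number of modified Jacobi theta functions in the variables $a,b,c,x,q$. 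Concretely I expect it to reduce, after cancelling the common factors, to an instance of the Weierstra{\ss}--Riemann addition formula \eqref{Weierstrass_Riemann} — indeed the coefficients appearing in \eqref{system_elliptic} are built precisely from $h_{x;a,b,c;q,p}$ and its companion $1-h_{x;a,b,c;q,p}=h_{x;b,a,c;q,p}$ via Lemma~\ref{thm:h_symmetry}, and the three-term structure of \eqref{system_elliptic} mirrors the three-term structure of \eqref{Weierstrass_Riemann}. I would make the bookkeeping transparent by rewriting \eqref{system_elliptic} as
$$
\frac{h_{x;a,b,c;q,p}(k-1,\ell)}{h_{x;a,b,c;q,p}(k-1,0)}\,B(k-1,\ell)
+\frac{h_{x;b,a,c;q,p}(\ell-1,k)}{h_{x;b,a,c;q,p}(\ell-1,0)}\,B(k,\ell-1)=B(k,\ell),
$$
so that the needed theta identity appears after substituting $(x,y,u,v)\mapsto(cq^{k+\ell-1},aq^{k-1},bq^{\ell},x)$ (or a close variant) into \eqref{Weierstrass_Riemann}, exactly as in the proof of Lemma~\ref{thm:h_symmetry} but with the extra shifts dictated by the arguments in \eqref{ellipticB}.

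The main obstacle will be purely organizational rather than conceptual: tracking the precise powers of $q$ attached to $a$, $b$, $c$, $x$, $1/x$ and to the ratios $a/b$, $b/a$, $c/b$, $c/a$ through the three telescoped products, and confirming that the single $q^{\ell}$ factor in \eqref{ellipticB} produces exactly the factor $(b/a)q^{\ell-k}$ (equivalently the coefficient $u/y$ in \eqref{Weierstrass_Riemann}) needed to match the addition formula. I would guard against sign and shift errors by first checking the recurrence in the corner case $k=\ell=1$ by hand — where all length-$k$ and length-$\ell$ products are single theta factors and \eqref{system_elliptic} reduces visibly to \eqref{Weierstrass_Riemann} with $(x,y,u,v)=(cq,a,bq,x)$ — and then observing that the general case differs only by the already-verified telescoping of the Pochhammer-type products, so no new theta identity is required. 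This establishes \eqref{ellipticB} for all $(k,\ell)\in\Lambda_{m,n}$.
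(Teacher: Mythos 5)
Your proposal is correct and follows essentially the same route as the paper: the paper proves the lemma by induction on $k+\ell$, which amounts exactly to your direct verification of the boundary conditions plus the recurrence, with the two-term combination collapsing (after the telescoping you describe, in which all $x$-dependent factors indeed cancel) to a single instance of the Weierstra{\ss}--Riemann addition formula \eqref{Weierstrass_Riemann}. The only detail to adjust is your guessed specialization: because $x$ drops out before the addition formula is invoked, the substitution used is $(x,y,u,v)\mapsto(a^{1/2}c^{1/2}q^{k+\ell-1/2},\,a^{-1/2}bc^{1/2}q^{\ell-1/2},\,a^{1/2}c^{1/2}q^{k-1/2},\,a^{1/2}c^{1/2}q^{-1/2})$ rather than one containing $x$, and it yields the factor $C(k,\ell)=\theta((a/b)q^{k-\ell},acq^{k+\ell-1},bcq^{k+\ell-1},q^{k+\ell};p)$ exactly as your bookkeeping anticipates.
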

\begin{proof}
The proof proceeds by induction on $k+\ell$. Since $B(0,0)=1$ by the
system \eqref{system_elliptic}, the $k+\ell=0$ case is trivial.
Further, if $k=0$ or $\ell=0$, the values $B(k,0)=B(0,\ell)=1$
specified in \eqref{system_elliptic} agree with those in \eqref{ellipticB}.
Finally, let $k,\ell>0$ and assume that the solution for $B(i,j)$
is given in  \eqref{ellipticB} for all $(i,j)\in\Lambda_{m,n}$
with $i+j<k+\ell$.
Then the left-hand side of \eqref{system_elliptic} is,
with \eqref{ellipticB} applied to rewrite
$B(k-1,\ell)$ and $B(k,\ell-1)$ by induction,
\begin{align*}
&
\frac{\theta(b/a;p)(bcq^{\ell}; q, p)_k
(acq^k, ab, cx, c/x, q^{k+1}; q, p)_{\ell}q^{\ell}}
{\theta((a/b)q^k,(b/a)q^{\ell};p)(bc;q,p)_k
(ac, abq^k, cxq^k, (c/x)q^k, q; q, p)_{\ell}}\\*
&\times \bigg( \frac{\theta(bcq^{k+2\ell-1},acq^{k-1},q^k,(a/b)q^k;p)}
{\theta(acq^{k+\ell-1},bcq^{k+\ell-1},q^{k+\ell};p)}\\*
  &\quad\;\;-\frac{a}{b}q^{k-\ell}
    \frac{\theta(acq^{2k+\ell-1},bcq^{\ell-1},q^{\ell},(b/a)q^{\ell};p)}
{\theta(acq^{k+\ell-1},bcq^{k+\ell-1},q^{k+\ell};p)} \bigg)\\
&=\frac{C(k, \ell)}
{\theta((a/b)q^{k-\ell}, acq^{k+\ell-1}, bcq^{k+\ell-1}, q^{k+\ell}; p)}\\*
 &\quad\times
 \frac{\theta((a/b)q^{k-\ell},b/a;p)(bcq^{\ell}; q, p)_k
(acq^k, ab, cx, c/x, q^{k+1}; q, p)_{\ell}}
{\theta((a/b)q^k,(b/a)q^{\ell};p)(bc;q,p)_k
(ac, abq^k, cxq^k, (c/x)q^k, q; q, p)_{\ell}} q^{\ell},
\end{align*}
with
\begin{align*}
C(k, \ell) &=
             \theta(bcq^{k+2\ell-1},acq^{k-1},q^k,(a/b)q^k;p)\\*
  &\quad-\frac{a}{b}q^{k-\ell}
\theta(acq^{2k+\ell-1},bcq^{\ell-1},q^{\ell},(b/a)q^{\ell};p). 
\end{align*}
Here we again use the Weierstra{\ss}--Riemann addition
formula \eqref{Weierstrass_Riemann}, now with the substitution
$(x,y,u,v)\mapsto(a^{\frac{1}{2}}c^{\frac{1}{2}}q^{k+\ell-\frac{1}{2}},
a^{-\frac{1}{2}}bc^{\frac{1}{2}}q^{\ell-\frac{1}{2}},
a^{\frac{1}{2}}c^{\frac{1}{2}}q^{k-\frac{1}{2}}, 
a^{\frac{1}{2}}c^{\frac{1}{2}}q^{-\frac{1}{2}})$.
This yields the equality
$$C(k, \ell)=
\theta((a/b)q^{k-\ell}, acq^{k+\ell-1}, bcq^{k+\ell-1}, q^{k+\ell}; p)$$
and the proof is complete.
\end{proof}

\begin{proof}[Proof of \eqref{CB_elliptic1}] 
With the specific choice of $h(i,j)$ in \eqref{h_elliptic1} we obtain
from \eqref{A3b} and \eqref{B_to_A} the following explicit
formulas for $A(k,l)$:
\begin{align*}
A(k, 0) &= \prod_{i=0}^{k-1} 
\frac{\theta(bcq^i,(c/b)q^i,axq^i,(a/x)q^i;p)}
{\theta(abq^i,(a/b)q^i,cxq^i,(c/x)q^i;p)} 
=\frac{(bc,c/b,ax,a/x;q,p)_k}{(ab,a/b,cx,c/x;q,p)_k},  
\nonumber\\*
& \quad\, \text{for}\quad k \in \{0, 1, \dots, m\}, 
\nonumber\\*
A(0, \ell) &= \prod_{j=0}^{\ell-1}
\frac{\theta(acq^j,(c/a)q^j,bxq^j,(b/x)q^j;p)}
{\theta(abq^j,(b/a)q^j,cxq^j,(c/x)q^j;p)}
=\frac{(ac,c/a,bx,b/x; q,p)_{\ell}}{(ab,b/a,cx,c/x; q,p)_{\ell}}, 
\nonumber\\*
& \quad\,\text{for}\quad \ell \in \{0, 1, \dots, n \},
\nonumber\\*
A(k, \ell) 
&=B(k, \ell)\frac{(bc,c/b,ax,a/x;q,p)_k}
{(ab,a/b,cx,c/x;q,p)_k}\frac{(ac,c/a,bx,b/x; q,p)_{\ell}}
{(ab,b/a,cx,c/x; q,p)_{\ell}},
\nonumber\\*
& \quad \,\text{for}\quad  
k \in \{1,2, \dots, m\} \quad\text{and}\quad 
\ell \in \{1,2, \dots, n\}.
\end{align*}
Combining the last equation with Lemma \ref{thm:Belliptic}, we have
\begin{align*}
A(k, \ell)
&=
\theta \Big(\frac{a}{b} q^{k-\ell};p \Big)
\frac{(bcq^{\ell},c/b,ax,a/x;q, p)_k 
(q^{k+1}, acq^k,c/a,bx,b/x; q, p)_{\ell}}
{(a/b; q,p)_{k+1} (q, q(b/a);q, p)_{\ell}(ab, cx, c/x; q, p)_{k+\ell}} q^{\ell},
\nonumber\\*
&=
\theta \Big(\frac{b}{a} q^{\ell-k};p \Big)
\frac{(acq^k,c/a,bx,b/x;q,p)_{\ell} 
(q^{\ell+1}, bcq^{\ell}, c/b, ax, a/x; q, p)_{k}}
{(b/a; q,p)_{\ell+1} (q, q(a/b);q,p)_{k}(ab,cx,c/x;q,p)_{\ell+k}} q^{k},
\nonumber\\*
& \quad \text{for}\quad (k, \ell) \in \Lambda_{m,n}. 
\end{align*}
Inserting this explicit expression for $A(k,l)$,
and those for $h(i,j)$ and $1-h(i,j)$ from \eqref{h_elliptic1} 
and \eqref{h_symmetry}, respectively, into
\eqref{equality} settles \eqref{CB_elliptic1}.
\end{proof}

\section{Chaundy--Bullard type identities viewed
  as B\'ezout identities} 
\label{sec:Bezout}

As pointed out in \cite[Remark~2.2]{KS08}, 
the Chaundy--Bullard identity \eqref{CB1} 
can be regarded as a \textit{B\'ezout identity},
\begin{equation}
1=P^{(1)}(x)Q^{(1)}(x)+P^{(2)}(x)Q^{(2)}(x).
\label{Bezout1}
\end{equation}
Here $P^{(1)}(x)$ and $P^{(2)}(x)$ are polynomials in $\C[x]$
of degrees $n+1$ and $m+1$, respectively,  
with no common zeros. 
The polynomials $Q^{(1)}(x)$ and $Q^{(2)}(x)$ in 
\eqref{Bezout1} have degree $m$ and $n$, respectively, and 
are moreover unique.
If we set $P^{(1)}(x)=(1-x)^{n+1}$ and $P^{(2)}(x)=x^{m+1}$, 
they are polynomials without common zeros of degree
$n+1$ and $m+1$, respectively.
Hence we have the equality
\begin{equation}
1=(1-x)^{n+1} Q^{(1)}_{m,n}(x)+x^{m+1} Q^{(2)}_{m,n}(x),
\label{Bezout12}
\end{equation}
where $Q^{(1)}_{m,n}(x)$ and $Q^{(2)}_{m,n}(x)$ are
polynomials of degree $m$ and $n$, respectively.
They are uniquely determined by using the symmetry
$Q^{(2)}_{m,n}(x)=Q^{(1)}_{n,m}(1-x)$ and finding
$Q^{(1)}_{m,n}(x)$ by dividing both sides of the identity
in \eqref{Bezout12} by $(1-x)^{n+1}$ and carrying out
Taylor expansion in $x$,
with the result being given by \eqref{CB1}. 

It is natural to ask whether this interpretation extends
and can be used to also prove the various extensions of the
Chaundy--Bullard identity. This can indeed be done for
three of the basic extensions (namely, the $q$-extension
and the two $(a,b;q)$-extensions), as these implicitly involve
polynomial bases. We did not succeed in interpreting
the $(a,b,c;q)$-extension or the elliptic extension of the
Chaundy--Bullard identity as B\'ezout identities
(as the underlying bases there are not polynomial bases but
special rational function bases, respectively, elliptic
function bases).

The analysis in this section
strongly connects to results in basic hypergeometric series
(cf.\ \cite{GR04}).
The \textit{basic hypergeometric series} $_{r+1}\phi_r$ is
defined by
\[
{}_{r+1}\phi_r\left[\begin{matrix}a_0, a_1, \dots, a_r\\
b_1, \dots, b_r \end{matrix};q, z\right]
:= \sum_{k=0}^{\infty}
\frac{(a_0,a_1,\cdots,a_r; q)_k}
{(q,b_1,\cdots,b_r; q)_k} z^k,
\]
where $|z|<1$ and $|q|<1$, if the series does not terminate,
for convergence \cite{GR04}. 

\subsection{$q$-Extension of the Chaundy--Bullard identity}
\label{sec:qCB_B}


While the first and second term on the right-hand side of
original Chaundy--Bullard identity 
\eqref{CB1}
are evidently symmetric with respect to the simple involution
$(x,n,m)\mapsto (1-x,m,n)$,
the corresponding symmetry for \eqref{CB_q1}, i.e.
\begin{equation}
1=(x; q)_{n+1} \sum_{k=0}^{m}
\begin{bmatrix}n+k\\k \end{bmatrix}_{q}
x^k
+x^{m+1} \sum_{k=0}^n 
\begin{bmatrix}m+k\\k \end{bmatrix}_{q}
q^k (x; q)_k,
\label{CB_q12}
\end{equation}
is less evident.
In order to clarify the ``hidden'' symmetry, 
we shall look at the
transition matrix of the two respective polynomial bases
$[x^n]_{n \in \N_0}$ and $[(x;q)_n]_{n \in \N_0}$.
This will enable us to interpret \eqref{CB_q12}
as a B\'ezout identity \eqref{Bezout1}.

By the $q$-binomial theorem, we have (cf.\ \cite[Ex.~1.2 (vi)]{GR04})
\begin{equation*}
(x;q)_n=\sum_{k=0}^n
\begin{bmatrix}n\\k\end{bmatrix}_q
(-1)^kq^{\binom k2}x^k.
\end{equation*}
Now, defining the lower-triangular
matrix $F=(f_{nk})_{n,k\in \N_0}$ by its entries
\begin{equation*}
f_{nk}=\begin{bmatrix}n\\k\end{bmatrix}_q(-1)^kq^{\binom k2},
\end{equation*}
the inverse of $F$ is known to be the lower-triangular
matrix $G=(g_{nk})_{n,k\in \N_0}$ with entries
\begin{equation*}
g_{nk}=\begin{bmatrix}n\\k\end{bmatrix}_q(-1)^kq^{\binom k2+k(1-n)}.
\end{equation*}
(This matrix inversion is equivalent to the case $a\to 0$ in
the matrix inversion for $B$ in \cite{B83}.)
A simple computation reveals that
\begin{equation*}
g_{nk}(q)=f_{nk}(q^{-1}).
\end{equation*}
Therefore, the relation
\begin{subequations}\label{invrel}
\begin{equation}
(x;q)_n=\sum_{k=0}^nf_{nk}(q)x^k
\end{equation}
is equivalent to
\begin{equation}
x^n=\sum_{k=0}^nf_{nk}(q^{-1})(x;q)_k.
\end{equation}
\end{subequations}

Let $\C(q)[x]$ be the vector space of polynomials in $x$ 
with coefficients that are rational functions in $q$ 
over the field $\C$. 
We define the linear operator $T$ on $\C(q)[x]$ by
\[
T\,\sum_{k\ge 0}c_k(q)x^k=\sum_{k\ge 0}c_k(q^{-1})(x;q)_k.
\]
Note that $T$ is an {\em involution} (this follows immediately
from \eqref{invrel}) but not a homomorphism (unless $q=1$).
To derive \eqref{CB_q1} using B\'ezout's identity, observe
that for each $m,n$ there exist unique polynomials
$Q^{(1)}_{m,n}$ and $Q^{(2)}_{m,n}$ of degree $m,n$, respectively, 
such that
\begin{equation*}
1=(x;q)_{n+1} Q^{(1)}_{m,n}(x;q)+x^{m+1} Q^{(2)}_{m,n}(x;q).
\end{equation*}
This implies
\begin{equation*}
\frac 1{(x;q)_{n+1}}=Q^{(1)}_{m,n}(x;q)+
\cO(x^{m+1})\qquad \text{as $x\to 0$.}
\end{equation*}
The next step is to compute $Q^{(1)}_{m,n}(x;q)$ by using
\begin{equation}
\label{fid}
\frac 1{(x;q)_{n+1}}=\sum_{k=0}^m
\begin{bmatrix}n+k\\k\end{bmatrix}_q x^k+
\cO(x^{m+1})\qquad \text{as $x\to 0$,}
\end{equation}
a result which is easily deduced from 
the non-terminating $q$-binomial theorem \cite[Equation (II.3)]{GR04}.

If we can show that
\begin{equation*}
T\bigg((x;q)_{n+1}\sum_{k=0}^m
\begin{bmatrix}n+k\\k\end{bmatrix}_q x^k\bigg)
=x^{n+1} Q^{(2)}_{n,m}(x;q),
\end{equation*}
for
\begin{equation*}
Q^{(2)}_{m,n}(x;q)
=\sum_{k=0}^n\begin{bmatrix}m+k\\k\end{bmatrix}_q q^k(x;q)_k,
\end{equation*}
then we are done,
as $Q^{(2)}_{n,m}(x;q)$ must have degree $m$ and be unique.
The computations are as follows:
\begin{align*}
&T\bigg((x;q)_{n+1}\sum_{k=0}^m
\begin{bmatrix}n+k\\k\end{bmatrix}_q x^k\bigg)\\
&=T \sum_{\ell=0}^{n+1}\sum_{k=0}^m
\begin{bmatrix}n+1\\ \ell\end{bmatrix}_q
(-1)^{\ell} q^{\binom \ell 2}
\begin{bmatrix}n+k\\k\end{bmatrix}_q x^{k+\ell}
\\
&=\sum_{\ell=0}^{n+1}\sum_{k=0}^m
\begin{bmatrix}n+1\\ \ell \end{bmatrix}_{q^{-1}}
(-1)^\ell q^{-\binom \ell 2}
\begin{bmatrix}n+k\\k\end{bmatrix}_{q^{-1}}(x;q)_{k+\ell}
\\
&=\sum_{k=0}^m\begin{bmatrix}n+k\\k\end{bmatrix}_{q^{-1}}(x;q)_k
\sum_{\ell=0}^{n+1}
\begin{bmatrix}n+1\\ \ell\end{bmatrix}_{q^{-1}}
(-1)^{\ell} q^{-\binom \ell 2}(xq^k;q)_{\ell}
\\
&=\sum_{k=0}^m\begin{bmatrix}n+k\\k\end{bmatrix}_{q^{-1}}(x;q)_k\,
x^{n+1}q^{k(n+1)}
\\
&=x^{n+1}\sum_{k=0}^m
\begin{bmatrix}n+k\\k\end{bmatrix}_qq^k(x;q)_k,
\end{align*}
which settles \eqref{CB_q12}.

\subsection{$(a,b;q)$-extension of 
the first kind of the Chaundy--Bullard identity}
\label{sec:abq1CB_B}

Next we consider the $(a,b;q)$-extensions of the fist kind; 
\begin{equation}\label{aqdjeq}
1=\frac{(bx;q)_{n+1}}{(b/a;q)_{n+1}}
\sum_{k=0}^m\frac{(q^{n+1},ax;q)_k}{(q,aq/b;q)_k}q^k+
\frac{(ax;q)_{m+1}}{(a/b;q)_{m+1}}
\sum_{k=0}^n\frac{(q^{m+1},bx;q)_k}{(q,bq/a;q)_k}q^k.
\end{equation}
The transition matrix 
$F=(f_{nk})_{n,k\in \N_0}$
between the polynomial bases
$[(ax;q)_n]_{n \in \N_0}$ and $[(bx;q)_n]_{n \in \N_0}$
is clearly symmetric in $a$ and $b$.
We actually do not need its explicit form here but nevertheless
note that the connection coefficients are given by
\begin{equation*}
f_{nk}(a,b;q)=\frac{(b/a;q)_n (q^{-n};q)_k}{(q,aq^{1-n}/b;q)_k}q^k
\end{equation*}
(the coefficients of the inverse sequence are
$g_{nk}(a,b;q)=f_{nk}(b,a;q)$); the connecting relation
\begin{equation*}
\sum_{k=0}^nf_{nk}(a,b;q)(ax;q)_k=(bx;q)_n
\end{equation*}
is equivalent to the $q$-Chu--Vandermonde summation
\cite[Equation~(II.6)]{GR04} (which conversely uniquely
determines the connection coefficients $f_{nk}(a,b;q)$).

To prove \eqref{aqdjeq} using B\'ezout's identity, observe
that for each $m,n$ there exist unique polynomials
$Q^{(1)}_{m,n}$ and $Q^{(2)}_{m,n}$ of degree $m,n$, respectively, such that
\begin{equation}\label{abid}
1=(bx;q)_{n+1}Q^{(1)}_{m,n}(a,b,x;q)+(ax;q)_{m+1}Q^{(2)}_{m,n}(a,b,x;q).
\end{equation}
This implies
\begin{equation*}
\frac 1{(bx;q)_{n+1}}=Q^{(1)}_{m,n}(a,b,x;q)\qquad
 \mod (ax;q)_{m+1}.
\end{equation*}
The next step is to compute $Q^{(1)}_{m,n}(a,b,x;q)$ by using
the $(a,b,c)\mapsto(q^{n+1},ax,aq/b)$
special case of \cite[Appendix~(II.23)]{GR04}, which is
\begin{equation*}
\frac {(b/a;q)_{n+1}}{(bx;q)_{n+1}}=
{}_2\phi_1\left[\begin{matrix}q^{n+1},ax\\aq/b\end{matrix};q,q\right]
+\frac{(b/a,q^{n+1},ax;q)_\infty}{(a/b,bq^{n+1}/a,bx;q)_\infty}\,
{}_2\phi_1\left[\begin{matrix}bq^{n+1}/a,bx\\bq/a\end{matrix};q,q\right].
\end{equation*}
This implies
\begin{equation*}
\frac 1{(bx;q)_{n+1}}=\frac 1{(b/a;q)_{n+1}}\,
\sum_{k=0}^m\frac{(q^{n+1},ax;q)_k}{(q,aq/b;q)_k}q^k
\qquad \mod (ax;q)_{m+1},
\end{equation*}
for $(ax;q)_{m+1}$ divides $(ax;q)_k$ for each $k>m$.
Hence we deduce
\begin{equation*}
Q^{(1)}_{m,n}(a,b,x;q)=\frac 1{(b/a;q)_{n+1}}\,
\sum_{k=0}^m\frac{(q^{n+1},ax;q)_k}{(q,aq/b;q)_k}q^k.
\end{equation*}

Since, in addition to \eqref{abid}, we also have
\begin{equation*}
1=(bx;q)_{n+1}Q^{(2)}_{n,m}(b,a,x;q)+(ax;q)_{m+1}Q^{(1)}_{n,m}(b,a,x;q),
\end{equation*}
it follows by uniqueness ($Q^{(1)}_{m,n}(a,b,x;q)$ and $Q^{(2)}_{n,m}(b,a,x;q)$
have the same degree $m$) that we must have
\begin{equation*}
Q^{(1)}_{m,n}(a,b,x;q)=Q^{(2)}_{n,m}(b,a,x;q),
\end{equation*}
which settles \eqref{aqdjeq}.

\subsection{$(a,b;q)$-extension of the second kind of the Chaundy--Bullard identity}
\label{sec:abq2CB_B}

The following identity of polynomials in $\C(q,a,b)[x+x^{-1}]$
is the $(a,b;q)$-extension of the second kind,
\begin{align}\label{abqdjeq}
1&=\frac{(bx,b/x;q)_{n+1}}{(ab,b/a;q)_{n+1}}
   \sum_{k=0}^m\frac{(q^{n+1},ax,a/x;q)_k}{(q,aq/b,abq^{1+n};q)_k}q^k\\*
  \nonumber
&\quad+\frac{(ax,a/x;q)_{m+1}}{(ab,a/b;q)_{m+1}}
\sum_{k=0}^n\frac{(q^{m+1},bx,b/x;q)_k}{(q,bq/a,abq^{1+m};q)_k}q^k.
\end{align}
The transition matrix between the two polynomial sequences
$[(ax,a/x;q)_n]_{n \in \N_0}$ and $[(bx,b/x;q)_n]_{n \in \N_0}$
is clearly symmetric in $a$ and $b$.
We actually do not need its explicit form here but nevertheless
note that the connection coefficients are given by
\begin{equation*}
  \widetilde
  f_{nk}(a,b;q)=\frac{(ab,b/a;q)_n (q^{-n};q)_k}{(q,ab,aq^{1-n}/b;q)_k}q^k
\end{equation*}
(the coefficients of the inverse sequence are
$ \widetilde
g_{nk}(a,b;q)=f_{nk}(b,a;q)$); the connecting relation
\begin{equation*}
  \sum_{k=0}^n \widetilde
  f_{nk}(a,b;q)(ax,a/x;q)_k=(bx,b/x;q)_n
\end{equation*}
is equivalent to the $q$-Pfaff--Saalsch\"utz summation
\cite[Equation~(II.12)]{GR04} (which conversely uniquely
determines the connection coefficients $ \widetilde
f_{nk}(a,b;q)$).

To prove \eqref{abqdjeq} using B\'ezout's identity, observe
that for each $m,n$ there exist unique polynomials
$Q^{(1)}_{m,n}$ and $Q^{(2)}_{m,n}$ of degree $m,n$, respectively, such that
\begin{equation}\label{abbid}
1=(bx,b/x;q)_{n+1}Q^{(1)}_{m,n}(a,b,x+x^{-1};q)+
(ax,a/x;q)_{m+1}Q^{(2)}_{m,n}(a,b,x+x^{-1};q).
\end{equation}
This implies
\begin{equation*}
\frac 1{(bx,b/x;q)_{n+1}}=Q^{(1)}_{m,n}(a,b,x+x^{-1};q)\qquad
\mod (ax,a/x;q)_{m+1}.
\end{equation*}
The next step is to compute $Q^{(1)}_{m,n}(a,b,x+x^{-1};q)$ by using
the $(a,b,c,e,f)\mapsto (q^{n+1},ax,a/x,aq/b,abq^{n+1})$
special case of \cite[Appendix~(II.24)]{GR04}, which is
\begin{align*}
\frac {(ab,b/a;q)_{n+1}}{(bx,b/x;q)_{n+1}}={}&{}
{}_3\phi_2\left[\begin{matrix}q^{n+1},ax,a/x\\
aq/b,abq^{n+1}\end{matrix};q,q\right]\\*
&{}+\frac{(b/a,q^{n+1},ax,a/x,b^2q^{n+1};q)_\infty}
{(a/b,bq^{n+1}/a,bx,b/x,abq^{n+1};q)_\infty}\,
{}_3\phi_2\left[\begin{matrix}bq^{n+1}/a,bx,b/x\\
bq/a,b^2q^{n+1}\end{matrix};q,q\right].
\end{align*}
The last equation implies
\begin{align*}
&\frac 1{(bx,b/x;q)_{n+1}}\\&=\frac 1{(ab,b/a;q)_{n+1}}\,
\sum_{k=0}^m\frac{(q^{n+1},ax,a/x;q)_k}{(q,aq/b,abq^{n+1};q)_k}q^k
\qquad \mod (ax,a/x;q)_{m+1},
\end{align*}
for $(ax,a/x;q)_{m+1}$ divides $(ax,a/x;q)_k$ for each $k>m$.
Hence we deduce
\begin{equation*}
Q^{(1)}_{m,n}(a,b,x+x^{-1};q)=\frac 1{(ab,b/a;q)_{n+1}}\,
\sum_{k=0}^m\frac{(q^{n+1},ax,a/x;q)_k}{(q,aq/b,abq^{n+1};q)_k}q^k.
\end{equation*}

Since, in addition to \eqref{abbid}, we also have
\begin{equation*}
1=(bx,b/x;q)_{n+1}Q^{(2)}_{n,m}(b,a,x+x^{-1};q)+
(ax,a/x;q)_{m+1}Q^{(1)}_{n,m}(b,a,x+x^{-1};q),
\end{equation*}
it follows by uniqueness ($Q^{(1)}_{m,n}(a,b,x+x^{-1};q)$ and
$Q^{(2)}_{n,m}(b,a,x+x^{-1};q)$
have the same degree $m$) that we must have
\begin{equation*}
Q^{(1)}_{m,n}(a,b,x+x^{-1};q)=Q^{(2)}_{n,m}(b,a,x+x^{-1};q),
\end{equation*}
which settles \eqref{abqdjeq}.

\section{Variants of the $q$-extended Chaundy--Bullard 
identities and $q$-commuting variables}
\label{sec:q_commuting}

In the original \eqref{CB1}, if we replace $x$ by $x/(x-1)$
and multiply the identity by $(1-x)^{m+n+1}$,
a variant is obtained, namely
\begin{align}\label{CB2}
(1-x)^{m+n+1} &=\sum_{k=0}^{m}
\binom{n+k}{k} (-1)^k x^k (1-x)^{m-k}\\*\nonumber
& \quad
+(-1)^{m+1} x^{m+1} \sum_{k=0}^n 
\binom{m+k}{k} (1-x)^{n-k}.
\end{align}
As discussed in \cite{KS08}, 
\eqref{CB1} is equivalent to the identity in the form
involving two variables $x$ and $y$, 
\begin{align}
\label{CBxy1}
(x+y)^{m+n+1}&=y^{n+1} \sum_{k=0}^{m}
\binom{n+k}{k} x^k (x+y)^{m-k}\\*\nonumber
&\quad+x^{m+1} \sum_{k=0}^n 
\binom{m+k}{k} y^k (x+y)^{n-k},
\end{align}
which is in \textit{homogeneous form}. 

In order to give a $q$-extension of the
homogeneous Chaundy--Bullard identity 
\eqref{CBxy1}, 
we consider the unital algebra 
$\C_q[X, Y]$ defined over
$\C$ generated by $X, Y$, satisfying the relation
\[
YX = q XY.
\]
$\C_q[X,Y]$ can be regarded as a $q$-deformation of the
commutative algebra $\C[x,y]$.
We call $X,Y$ forming $\C_q[X,Y]$ 
\textit{$q$-commuting variables}.
The following \textit{binomial theorem for 
$q$-commuting variables} is well known 
(see \cite{Koo97,Sch20} and references therein),
\[
(X+Y)^n=\sum_{k=0}^n 
\begin{bmatrix}n\\k\end{bmatrix}_q X^k Y^{n-k},
\]
for $X, Y \in \C_q[X,Y]$,
where the $q$-binomial coefficient is defined by
\eqref{q_binom1}. 

Then, from Theorem~\ref{ecbthm} we obtain that
a $q$-extension of \eqref{CBxy1} is given by 
\begin{align*}
(X+Y)^{m+n+1} &= Y^{n+1} \sum_{k=0}^{m}
\begin{bmatrix}n+k\\k\end{bmatrix}_q
q^{-(n+1)k}\,X^k (X+Y)^{m-k}\\*
& \quad
+
X^{m+1} \sum_{k=0}^n 
\begin{bmatrix}m+k\\k\end{bmatrix}_{q^{-1}}
q^{(m+1)k}\, Y^k (X+Y)^{n-k},
\end{align*}
for $X, Y \in \C_q[X,Y]$.
This is the homogeneous form of the
$q$-extension of the Chaundy--Bullard identity 
\eqref{CB_q1}. 
The left-hand and right-hand sides of this identity are
both invariant under the transformation
$(X, Y, q, m, n) \mapsto (Y, X, q^{-1}, n, m)$.


\section{Elliptic extensions of the binomial theorem}
\label{sec:elliptic_binomial}
Recall (from Section~\ref{sec:introduction}) that we
denote by $\mathbb E_{a_1,a_2,\ldots,a_s;q,p}$ the field of totally
elliptic functions over $\C$, in the complex variables 
$\log_qa_1,\ldots,\log_qa_s$, with equal
periods $\sigma^{-1}$, $\tau\sigma^{-1}$
(where $q=e^{2\pi\sqrt{-1}\sigma}$, $p=e^{2\pi\sqrt{-1}\tau}$,
$\sigma,\tau\in\C$, $\Im\tau>0$), of double periodicity.

\subsection{An elliptic extension of the binomial theorem}
\label{subsec:elliptic_binomial}
In \cite[Theorem~2]{Sch20} one of the authors proved an
elliptic extension of the binomial theorem which we recall for convenience
and for comparison with our new analogous result in Theorem~\ref{vwdbinth}
below.

For indeterminates $a$, $b$, complex numbers $q$, $p$ (with $|p|<1$),
and nonnegative integers $n$, $k$,
define a variant of elliptic binomial coefficients, which here,
for better distinction from \eqref{eq:HBin},
we may refer to as \textit{$W$-binomial coefficients}
as follows
(this is exactly the expression for $w(\mathcal P((0,0)\to(k,n-k)))$
in \cite[Theorem~2.1]{Sch07}):
\begin{equation}\label{ellbc}
\begin{bmatrix}n\\k\end{bmatrix}_{a,b;q,p}:=
\frac{(q^{1+k},aq^{1+k},bq^{1+k},aq^{1-k}/b;q,p)_{n-k}}
{(q,aq,bq^{1+2k},aq/b;q,p)_{n-k}}.
\end{equation}
The $W$-binomial coefficient is indeed elliptic.
In particular,
$\left[\begin{smallmatrix}n\\k\end{smallmatrix}\right]_{a,b;q,p}
\in\E_{a,b,q^n,q^k;q,p}$.

It is immediate from the definition of \eqref{ellbc} that (for
integers $n,k$) there holds
\begin{subequations}\label{qbinrel}
\begin{equation}
\begin{bmatrix}n\\0\end{bmatrix}_{a,b;q,p}=
\begin{bmatrix}n\\n\end{bmatrix}_{a,b;q,p}=1,
\end{equation}
and
\begin{equation}
\begin{bmatrix}n\\k\end{bmatrix}_{a,b;q,p}=0,\qquad\text{whenever}\quad
k=-1,-2,\dots,\quad\text{or}\quad k> n.
\end{equation}
Furthermore, using the Weierstra{\ss}--Riemann addition
formula in \eqref{Weierstrass_Riemann}
one can verify the following recursion formula for the
elliptic binomial coefficients:
\begin{equation}\label{rec}
\begin{bmatrix}n+1\\k\end{bmatrix}_{a,b;q,p}=
\begin{bmatrix}n\\k\end{bmatrix}_{a,b;q,p}+
\begin{bmatrix}n\\k-1\end{bmatrix}_{a,b;q,p}\,W_{a,b;q,p}(k,n+1-k),
\end{equation}
\end{subequations}
for nonnegative integers $n$ and $k$, where the
elliptic weight function $W_{a,b;q,p}$ is defined on $\N_0^2$ as
\begin{equation}\label{Wdef}
W_{a,b;q,p}(s,t):=\frac{\ta(aq^{s+2t},bq^{2s},bq^{2s-1},aq^{1-s}/b,aq^{-s}/b;p)}
{\ta(aq^s,bq^{2s+t},bq^{2s+t-1},aq^{1+t-s}/b,aq^{t-s}/b;p)}q^t.
\end{equation}

Clearly, $W_{a,b;q,p}(s,0)=1$, for all $s$.
If one lets $p\to 0$, $a\to 0$, then $b\to 0$ (in this order), the weights
in \eqref{Wdef} reduce to the standard $q$-weights
$$W_q(s,t):=\lim_{b\to 0}\Big(\lim_{a\to 0}
\Big(\lim_{p\to 0}W_{a,b;q,p}(s,t)\Big)\Big)=q^t,$$
and the
relations in \eqref{qbinrel} reduce to
\begin{equation*}
\begin{bmatrix}n\\0\end{bmatrix}_{q}=
\begin{bmatrix}n\\n\end{bmatrix}_{q}=1,
\end{equation*}
\begin{equation*}
\begin{bmatrix}n+1\\k\end{bmatrix}_{q}=
\begin{bmatrix}n\\k\end{bmatrix}_{q}+
\begin{bmatrix}n\\k-1\end{bmatrix}_{q}\,q^{n+1-k},
\end{equation*}
for positive integers $n$ and $k$ with $n\ge k$, which is
a well-known recursion for the $q$-binomial coefficients.
(If instead, one lets $p\to 0$, $b\to 0$, and then $a\to 0$ (in this order),
the weight function in \eqref{Wdef} reduces to $q^{-t}$ and
 \eqref{qbinrel} reduces to the recursion for the $q$-binomial
 coefficients where $q$ has been replaced by $q^{-1}$.)
In \cite{Sch07} lattice paths in the integer
lattice $\N_0^2$ were enumerated with respect to precisely this
weight function. A similar weight function was subsequently
used by Borodin, Gorin and Rains in \cite[Section~10]{BGR10}
(see in particular the expression obtained for
$\frac{w(i,j+1)}{w(i,j)}$ on p.~780 of that paper)
in the context of weighted lozenge tilings. 

\begin{definition}\label{defea}
For two complex numbers $q$ and $p$ with $|p|<1$,
let\break $\C_{q,p}[X,Y,\E_{a,b;q,p}]$ denote
the associative unital algebra over $\C$,
generated by $X$, $Y$, and the commutative
subalgebra $\E_{a,b;q,p}$,
satisfying the following three relations:
\begin{subequations}\label{defeaeq}
\begin{align}
YX&=W_{a,b;q,p}(1,1)\,XY,\label{elleq}\\
Xf(a,b)&=f(aq,bq^2)\,X,\label{xf}\\\label{yf}
Yf(a,b)&=f(aq^2,bq)\,Y,
\end{align}
\end{subequations}
for all $f\in\E_{a,b;q,p}$.
\end{definition} 
We refer to the variables $X,Y,a,b$
forming $\C_{q,p}[X,Y,\E_{a,b;q,p}]$
as {\em elliptic commuting} variables.
The algebra $\C_{q,p}[X,Y,\E_{a,b;q,p}]$ reduces to
$\C_{q}[X,Y]$ if one formally lets $p\to 0$, $a\to 0$,
then $b\to 0$ (in this order), while (having eliminated the nome $p$) 
relaxing the condition of ellipticity.
It should be noted that the monomials $X^kY^l$ form a basis for the
algebra $\C_{q,p}[X,Y,\E_{a,b;q,p}]$ as a left module over $\E_{a,b;q,p}$,
i.e., any element can be written uniquely as a finite sum
$\sum_{k,l\ge 0} f_{kl}X^kY^l$ with $f_{kl}\in \E_{a,b;q,p}$ which
we call the \textit{normal form} of the element.

The following result from \cite[Theorem~2]{Sch20}
shows that the normal form of the binomial
$(X+Y)^n$ is nice; each (left) coefficient of $X^kY^{n-k}$ completely
factorizes as an expession in $\E_{a,b;q,p}$.

\begin{theorem}[Binomial theorem for 
  variables in \mbox{$\C_{q,p}[X,Y,\E_{a,b;q,p}]$}]\label{ebthm}
  Let $n\in\N_0$.
  Then, as an identity in  $\C_{q,p}[X,Y,\E_{a,b;q,p}]$, we have
\begin{equation}\label{eqbinth}
(X+Y)^n=\sum_{k=0}^n\begin{bmatrix}n\\k\end{bmatrix}_{a,b;q,p}X^kY^{n-k}.
\end{equation}
\end{theorem}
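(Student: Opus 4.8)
The plan is to prove \eqref{eqbinth} by induction on $n$, with the inductive step reduced to the recursion \eqref{rec} for the $W$-binomial coefficients. The case $n=0$ is immediate: both sides equal the unit and $\begin{bmatrix}0\\0\end{bmatrix}_{a,b;q,p}=1$ by \eqref{qbinrel}. For the inductive step, assume \eqref{eqbinth} for some $n$ and multiply it by $X+Y$ \emph{on the right} (so that the coefficients $\begin{bmatrix}n\\k\end{bmatrix}_{a,b;q,p}$, which sit to the left of each monomial, are not disturbed), and distribute:
\[
(X+Y)^{n+1}=\sum_{k=0}^n\begin{bmatrix}n\\k\end{bmatrix}_{a,b;q,p}X^kY^{n-k}X
+\sum_{k=0}^n\begin{bmatrix}n\\k\end{bmatrix}_{a,b;q,p}X^kY^{n+1-k}.
\]
The second sum is already in normal form; the only work is to bring $X^kY^{n-k}X$ to normal form.

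The key auxiliary fact I would establish is the normal-ordering identity
\[
Y^jX=W_{a,b;q,p}(1,j)\,XY^j\qquad(j\ge 0),
\]
proved by a sub-induction on $j$: the base case $j=0$ is $W_{a,b;q,p}(1,0)=1$, and for the step one uses \eqref{yf} to commute $Y$ past $W_{a,b;q,p}(1,j)$ (turning its parameters $(a,b)$ into $(aq^2,bq)$) and then \eqref{elleq}, which reduces the claim to
\[
W_{aq^2,bq;q,p}(1,j)\,W_{a,b;q,p}(1,1)=W_{a,b;q,p}(1,j+1);
\]
by the definition \eqref{Wdef} this is verified by a direct cancellation of theta factors, with no further appeal to \eqref{Weierstrass_Riemann}. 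Combining this with \eqref{xf}, used to commute $W_{a,b;q,p}(1,n-k)$ past $X^k$ (replacing $(a,b)$ by $(aq^k,bq^{2k})$), together with the substitution identity $W_{aq^k,bq^{2k};q,p}(1,n-k)=W_{a,b;q,p}(k+1,n-k)$ (again immediate from \eqref{Wdef}), one gets
\[
X^kY^{n-k}X=W_{a,b;q,p}(k+1,n-k)\,X^{k+1}Y^{n-k}.
\]

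Substituting this into the first sum and reindexing $k\mapsto k-1$, the right-hand side of the displayed expansion of $(X+Y)^{n+1}$ becomes
\[
\sum_{k=0}^{n+1}\left(\begin{bmatrix}n\\k-1\end{bmatrix}_{a,b;q,p}W_{a,b;q,p}(k,n+1-k)
+\begin{bmatrix}n\\k\end{bmatrix}_{a,b;q,p}\right)X^kY^{n+1-k},
\]
where the boundary terms $k=0$ and $k=n+1$ come out correctly thanks to the vanishing conventions in \eqref{qbinrel} and to $W_{a,b;q,p}(n+1,0)=1$. By the recursion \eqref{rec} the bracketed coefficient equals $\begin{bmatrix}n+1\\k\end{bmatrix}_{a,b;q,p}$, which closes the induction. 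The only genuine labor is the bookkeeping of the parameter shifts forced by \eqref{xf}--\eqref{yf} and the verification of the two displayed $W$-identities; both are routine consequences of \eqref{Wdef} after the cancellations are carried out, so I do not expect a real obstacle here, since the substantive input—the recursion \eqref{rec}—has already been supplied. Alternatively, one may prove \eqref{eqbinth} directly by expanding $(X+Y)^n$ into its $2^n$ words in $X$ and $Y$, interpreting $X$ as an east step and $Y$ as a north step, normal-ordering each word by repeated use of \eqref{defeaeq}, and recognizing the resulting left coefficient of $X^kY^{n-k}$ as the weighted count of lattice paths from $(0,0)$ to $(k,n-k)$ in the model of \cite{Sch07}, which is exactly \eqref{ellbc}.
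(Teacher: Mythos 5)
Your proof is correct, but it is worth noting that the paper itself offers no proof of Theorem~\ref{ebthm} at all: the result is simply imported from \cite[Theorem~2]{Sch20}, with the recursion \eqref{rec} stated (as a consequence of \eqref{Weierstrass_Riemann}) but not otherwise exploited. Your argument supplies a genuine self-contained derivation from the data the paper does provide. I checked the two computational claims on which it rests. First, writing out \eqref{Wdef} with $s=1$ and substituting $(a,b)\mapsto(aq^2,bq)$, the product $W_{aq^2,bq;q,p}(1,j)\,W_{a,b;q,p}(1,1)$ telescopes after cancelling the common factors $\ta(aq^3,bq^3,bq^2,aq/b,a/b;p)$ to exactly $W_{a,b;q,p}(1,j+1)$; no appeal to the addition formula is needed, as you predicted. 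Second, the substitution $(a,b)\mapsto(aq^k,bq^{2k})$ in $W_{a,b;q,p}(1,t)$ reproduces $W_{a,b;q,p}(k+1,t)$ term by term. Together with $W_{a,b;q,p}(s,0)=1$ and the vanishing conventions in \eqref{qbinrel}, the reindexed sum matches \eqref{rec} exactly, and the fact that $\E_{a,b;q,p}$ is commutative lets you place the weight $W_{a,b;q,p}(k,n+1-k)$ on either side of the binomial coefficient. The one thing your induction takes on faith is \eqref{rec} itself, which the paper asserts but does not verify; that is the only place where the Weierstra{\ss}--Riemann formula \eqref{Weierstrass_Riemann} actually enters, so a fully self-contained write-up would include that verification. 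Your alternative sketch via normal-ordering the $2^n$ words and identifying the coefficient of $X^kY^{n-k}$ with the weighted lattice-path count of \cite{Sch07} is also sound and is closer in spirit to how the paper motivates \eqref{ellbc}.
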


In \cite{Sch20} convolution was applied to this result
(together with comparison of coefficients) to
recover Frenkel and Turaev's ${}_{10}V_9$ summation \cite{FT97}
(see also \cite[Equation~(11.4.1)]{GR04}),
an identity which is fundamental to the
theory of elliptic hypergeometric series:

\begin{proposition}[Frenkel and Turaev's ${}_{10}V_9$ summation]\label{propft}
Let $n\in\N_0$ and $a,b,c,d,e,q,p\in\C$ with $|p|<1$.
Then there holds the following identity:
\begin{align}\label{propfteq}
\sum_{k=0}^n\frac{\ta(aq^{2k};p)}{\ta(a;p)}
\frac{(a,b,c,d,e,q^{-n};q,p)_k}
{(q,aq/b,aq/c,aq/d,aq/e,aq^{n+1};q,p)_k}q^k&\\\nonumber
=
\frac{(aq,aq/bc,aq/bd,aq/cd;q,p)_n}
{(aq/b,aq/c,aq/d,aq/bcd;q,p)_n}&,
\end{align}
where $a^2q^{n+1}=bcde$.
\end{proposition}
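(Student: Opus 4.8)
The plan is to derive \eqref{propfteq} from the binomial theorem for elliptic commuting variables, Theorem~\ref{ebthm}, by the standard device of \emph{convolution} together with comparison of coefficients (this is the route taken in \cite{Sch20}). In the algebra $\C_{q,p}[X,Y,\E_{a,b;q,p}]$ one has the trivial operator identity $(X+Y)^{m+n}=(X+Y)^m(X+Y)^n$ for $m,n\in\N_0$. Expanding all three powers by \eqref{eqbinth} and bringing everything into normal form will produce an elliptic, weight-dependent Vandermonde-type convolution for the $W$-binomial coefficients \eqref{ellbc}, and after a suitable reparametrization that convolution is exactly the Frenkel--Turaev ${}_{10}V_9$ summation.

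First I would expand the left-hand side by Theorem~\ref{ebthm} as $\sum_{j=0}^{m+n}\left[\begin{smallmatrix}m+n\\j\end{smallmatrix}\right]_{a,b;q,p}X^jY^{m+n-j}$. For the right-hand side I would expand each factor by \eqref{eqbinth} and then reduce the product to its normal form $\sum f_{kl}X^kY^l$ using the three defining relations \eqref{defeaeq}. Concretely: the coefficient $\left[\begin{smallmatrix}n\\k\end{smallmatrix}\right]_{a,b;q,p}$ of the second factor has to be commuted to the far left past $X^iY^{m-i}$, which by iterating \eqref{xf} and \eqref{yf} replaces the arguments $(a,b)$ by $(aq^{2m-i},bq^{m+i})$; and the block $Y^{m-i}X^k$ has to be reordered into $X^kY^{m-i}$, which by iterating \eqref{elleq} (together with \eqref{xf}--\eqref{yf}) contributes an explicit product of theta functions — the very rearrangement factor that already appears in the proof of Theorem~\ref{ebthm} in \cite{Sch20}. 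Since the monomials $X^kY^l$ form a left $\E_{a,b;q,p}$-basis of the algebra, I may then legitimately equate the (left) coefficient of $X^jY^{m+n-j}$ on the two sides.

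The identity obtained in this way is a finite single sum over $k$ (with $i=j-k$) expressing $\left[\begin{smallmatrix}m+n\\j\end{smallmatrix}\right]_{a,b;q,p}$ as a sum of products of two $W$-binomial coefficients times the rearrangement factor. Writing out \eqref{ellbc} and \eqref{Wdef} in terms of theta-shifted factorials, dividing through by the $k=0$ term, and relabeling — matching the terminating parameter $q^{-n}$ to the truncation index, expressing the remaining numerator/denominator parameters $b,c,d,e$ in terms of $a,b,q$ and $m,n,j$, and, if needed, reversing the order of summation — collapses the right-hand side to a ${}_{10}V_9$ series; the arithmetic relation among $m,n,j$ becomes the elliptic balancing condition $a^2q^{n+1}=bcde$, and the left-hand side becomes the closed-form product in \eqref{propfteq}. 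I would conclude by verifying the boundary/degenerate cases ($j=0$ or $j=m+n$) and checking consistency with the classical picture in the $p\to0$, $a\to0$, $b\to0$ limits.

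The main obstacle is not conceptual but the explicit bookkeeping of the normal-ordering step: carrying the theta-function rearrangement factor correctly through $Y^{m-i}X^k\mapsto X^kY^{m-i}$, and then performing the precise change of variables that turns the convolution into the ${}_{10}V_9$ with its balancing condition. One must be careful that the genuinely elliptic (rather than merely $q$-) structure is preserved at every stage — i.e.\ that each coefficient really lies in the claimed field of elliptic functions, that the balancing is respected, and that no spurious theta factors survive the simplification.
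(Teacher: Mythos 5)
Your route is exactly the one the paper takes for this proposition (the paper itself gives no independent proof but points to \cite{Sch20}): expand $(X+Y)^{m+n}=(X+Y)^m(X+Y)^n$ via Theorem~\ref{ebthm}, normal-order the product using \eqref{defeaeq}, and equate left coefficients of $X^jY^{m+n-j}$. The normal-ordering details you sketch are right: commuting the second elliptic coefficient past $X^iY^{m-i}$ does produce the shift $(a,b)\mapsto(aq^{2m-i},bq^{m+i})$, and reordering $Y^{m-i}X^k$ contributes the product of $W$-weights.

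One step you dismiss as bookkeeping is in fact a genuine (if standard) missing ingredient. The convolution identity you obtain has only $a$ and $b$ as continuous parameters; the remaining ${}_{10}V_9$ parameters come out locked to integer powers of $q$ through $m$, $n$ and $j$. Proposition~\ref{propft} as stated has four continuous parameters constrained only by the balancing condition $a^2q^{n+1}=bcde$, so after the relabeling you still must invoke analytic continuation in (the logarithms of) the discretized parameters to pass from $q^m$, $q^j$ to continuous variables --- precisely the point the paper makes in the Remark following Corollary~\ref{corftbinconv} for the analogous convolution in $\C_{q,p}[X,Y,\E_{x,a,b,c;q,p}]$. Without that final step your argument establishes only the sub-case of \eqref{propfteq} in which two of $b,c,d,e$ are integer powers of $q$ times fixed quantities.
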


It is straightforward to use the lattice path model
to derive a homogeneous Chaundy--Bullard identity
for the elliptic commuting variables forming
the algebra  $\C_{q,p}[X,Y,\E_{a,b;q,p}]$. (We omit the details;
it is also principle possible to verify the identity by induction.)
The result is as follows:
\begin{theorem}[Homogeneous Chaundy--Bullard identity for 
  variables in\break 
  \mbox{$\C_{q,p}[X,Y,\E_{a,b;q,p}]$}]\label{ecbthm}
  Let $n,m\in\N_0$. Then the following identity is valid in\break 
  $\C_{q,p}[X,Y,\E_{a,b;q,p}]$:
\begin{align}\label{eq:ecbthm}
  (X+Y)^{m+n+1}&=\sum_{k=0}^m\begin{bmatrix}n+k\\k\end{bmatrix}_{a,b;q,p}
  X^kY^{n+1}(X+Y)^{m-k}\\*\nonumber
  &\quad+\sum_{k=0}^n\begin{bmatrix}m+k\\m\end{bmatrix}_{a,b;q,p}
  W_{a,b;q,p}(m+1,k)\,X^{m+1}Y^k(X+Y)^{n-k}.
\end{align}
\end{theorem}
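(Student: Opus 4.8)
The plan is to mimic the lattice-path proof of Theorem~\ref{thm:elliptic_CB} given in Section~\ref{sec:proof}, but now carrying the noncommuting variables $X,Y$ (and the subalgebra $\E_{a,b;q,p}$) through the enumeration rather than specializing to a scalar weight function. Concretely, I would set up the rectangular grid $\Lambda_{m+1,n+1}$ exactly as in Section~\ref{sec:general}, but assign to each step an \emph{operator} weight: an east step $(i,j)\to(i+1,j)$ gets the weight $X$ together with the shift $f(a,b)\mapsto f(aq,bq^2)$ encoded in relation \eqref{xf}, and a north step $(i,j)\to(i,j+1)$ gets the weight $Y$ together with the shift in \eqref{yf}; the ordering relation \eqref{elleq} is precisely what makes a north step past an east step pick up the factor $W_{a,b;q,p}(1,1)$, and more generally relation \eqref{elleq} together with \eqref{xf}--\eqref{yf} forces a north step performed at height coordinates $(s,t)$ (relative to a corner) to commute past a block of east steps with the weight $W_{a,b;q,p}(s,t)$. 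The point is that the normal-form coefficient of $X^kY^{\ell}$ collected from all monotone lattice paths from the origin to $(k,\ell)$ is exactly $\left[\begin{smallmatrix}k+\ell\\k\end{smallmatrix}\right]_{a,b;q,p}$ — this is the content of Theorem~\ref{ebthm}, or equivalently the path-counting statement of \cite[Theorem~2.1]{Sch07} recalled around \eqref{ellbc}.

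Granting that dictionary, the proof is the operator analogue of the argument culminating in \eqref{equality}. One decomposes every monotone path from $(0,0)$ to $(m+1,n+1)$ according to its last step that is \emph{not} along the top or right boundary: either it is the step $(k,n)\to(k,n+1)$ for some $k\in\{0,1,\dots,m\}$, after which the path runs east along the top to the corner, or it is the step $(m,\ell)\to(m+1,\ell)$ for some $\ell\in\{0,1,\dots,n\}$, after which the path runs north along the right edge. Summing the operator weights of \emph{all} monotone paths from $(0,0)$ to $(m+1,n+1)$ gives $(X+Y)^{m+n+1}$ by Theorem~\ref{ebthm} (with $n$ there replaced by $m+n+1$). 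Splitting that sum along the dichotomy above, the first family contributes $\sum_{k=0}^m\left[\begin{smallmatrix}n+k\\k\end{smallmatrix}\right]_{a,b;q,p}X^kY^{n+1}(X+Y)^{m-k}$ — the binomial coefficient is the weight of the $(0,0)\to(k,n)$ portion, the factor $Y^{n+1}$ accounts for the single north step at the top together with the remaining $\ldots$ wait, more carefully: the portion from $(0,0)$ to $(k,n)$ contributes $\left[\begin{smallmatrix}n+k\\k\end{smallmatrix}\right]_{a,b;q,p}X^kY^n$, the distinguished north step contributes one more $Y$, and the free tail from $(k,n+1)$ to $(m+1,n+1)$ contributes $(X+Y)^{m-k}$ (the tail is a ``fresh'' binomial expansion because, by Theorem~\ref{ebthm} applied in the shifted algebra, the string of east and north steps above the line $j=n+1$ again sums to a power of $X+Y$). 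The second family contributes $\sum_{k=0}^n\left[\begin{smallmatrix}m+k\\m\end{smallmatrix}\right]_{a,b;q,p}X^mY^k\cdot X\cdot(X+Y)^{n-k}$ from the same bookkeeping, and one must then move that extra $X$ past $Y^k$ using \eqref{elleq} iterated, which produces exactly the factor $W_{a,b;q,p}(m+1,k)$ and turns $X^mY^kX$ into $W_{a,b;q,p}(m+1,k)\,X^{m+1}Y^k$, matching the right-hand side of \eqref{eq:ecbthm}.

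The main obstacle — and the one place where care is genuinely needed — is tracking the $\E_{a,b;q,p}$-shifts so that the weight picked up when the distinguished final east/north step, and then the entire free tail, are slid past the already-recorded prefix is precisely $W_{a,b;q,p}(m+1,k)$ (respectively $1$ for the top family, since $W_{a,b;q,p}(s,0)=1$). In other words, one needs the identity that in $\C_{q,p}[X,Y,\E_{a,b;q,p}]$ the normal form of $X^aY^b\,(X+Y)^c$ equals $X^aY^b$ times a ``shifted binomial'' which, upon normal-ordering, reproduces the weights $W_{a,b;q,p}(\,\cdot\,,\cdot\,)$ with the correct arguments; this is a direct consequence of \eqref{elleq}--\eqref{yf} and of Theorem~\ref{ebthm}, but it must be stated cleanly. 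Because the paper explicitly says ``We omit the details; it is also in principle possible to verify the identity by induction,'' I would in fact present the induction on $m+n$ as the written proof: check the base case $m=n=0$ (which is just $(X+Y)^1=Y+W_{a,b;q,p}(1,0)X=Y+X$), and for the inductive step split off either the bottom row or the left column of the grid — equivalently, use the recursion \eqref{rec} for the elliptic binomial coefficients together with the defining relations \eqref{defeaeq} to pass from $(m,n)$ to $(m+1,n)$ and $(m,n+1)$ — so that the whole argument reduces to repeated application of \eqref{rec}, \eqref{elleq}, \eqref{xf}, \eqref{yf}, and the telescoping one already used to derive \eqref{equality}. The elliptic content is entirely contained in \eqref{rec}, which in turn rests on the Weierstra{\ss}--Riemann addition formula \eqref{Weierstrass_Riemann}, so no new special-function input is required beyond what Section~\ref{sec:elliptic_binomial} already sets up.
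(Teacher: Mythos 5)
Your overall route --- decompose the word expansion of $(X+Y)^{m+n+1}$ by a lattice-path dichotomy, apply Theorem~\ref{ebthm} to the prefix and to the free tail, and commute the one extra $X$ past $Y^k$ to produce $W_{a,b;q,p}(m+1,k)$ --- is exactly what the paper has in mind (it omits all details, offering only ``use the lattice path model'' or ``verify by induction''), and your final formulas are the correct ones. But the combinatorial setup as you state it contains a genuine error that you should repair before writing this up: it is \emph{not} true that ``summing the operator weights of all monotone paths from $(0,0)$ to $(m+1,n+1)$ gives $(X+Y)^{m+n+1}$.'' Those paths have length $m+n+2$, and their weighted sum is the single normal-form term $\left[\begin{smallmatrix}m+n+2\\ m+1\end{smallmatrix}\right]_{a,b;q,p}X^{m+1}Y^{n+1}$. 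The object $(X+Y)^{m+n+1}$ is the sum over all $2^{m+n+1}$ words of length $m+n+1$, i.e.\ over monotone paths of that length with \emph{arbitrary} endpoint. Consequently the ``last step not along the north or east boundary'' decomposition --- which is the right one for the inhomogeneous identity \eqref{equality}, where the forced boundary tail carries weight $1$ --- does not apply here; if the tail really ran from $(k,n+1)$ east to the corner it would contribute $X^{m+1-k}$, not the $(X+Y)^{m-k}$ you write. The correct dichotomy is a \emph{first-passage} one: a word of length $m+n+1$ contains either at least $n+1$ letters $Y$ or at least $m+1$ letters $X$ (exclusively, by pigeonhole), and one splits according to the position of the $(n+1)$-st $Y$ (preceded by $k\le m$ letters $X$) or of the $(m+1)$-st $X$ (preceded by $k\le n$ letters $Y$); the suffix after that distinguished letter is then genuinely free and factors off as $(X+Y)^{m-k}$, resp.\ $(X+Y)^{n-k}$, by associativity alone.

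With that correction the rest goes through as you describe: the prefix sums equal $\left[\begin{smallmatrix}n+k\\ k\end{smallmatrix}\right]_{a,b;q,p}X^kY^{n}$ and $\left[\begin{smallmatrix}m+k\\ m\end{smallmatrix}\right]_{a,b;q,p}X^{m}Y^{k}$ by Theorem~\ref{ebthm}, appending the distinguished letter gives $X^kY^{n+1}$ in the first family, and in the second family the identity $X^{m}Y^{k}X=W_{a,b;q,p}(m+1,k)\,X^{m+1}Y^{k}$ follows from $Y^kX=W_{a,b;q,p}(1,k)\,XY^k$ (iterating \eqref{elleq} with the shifts \eqref{xf}--\eqref{yf}; the telescoping of the product of $W$'s does require the explicit form \eqref{Wdef} and is worth displaying). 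Your fallback of an induction on $m+n$ via the recursion \eqref{rec}, with the base case $(X+Y)^1=Y+W_{a,b;q,p}(1,0)X$, is also viable and is the paper's stated alternative.
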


\subsection{A new elliptic extension of the binomial theorem}
\label{subsec:new_elliptic_binomial}
Inspired by the lattice path model in Section~\ref{sec:proof} of this
paper, we present a new elliptic extension of the binomial theorem
which is similar to Theorem~\ref{ebthm} but different, 
see Theorem~\ref{vwdbinth} below.

First define $h_{x; a,b,c;q,p}(i,j)$ as in \eqref{h_elliptic1}, and let
\begin{equation}\label{H_elliptic1}
H_{x; a, b, c: q, p}(i, j)
:= \frac{ h_{x;a,b,c;q,p}(i, j)}{h_{x;a,b,c;q,p}(i, 0)}.
\end{equation}
With $B(k, \ell)$ given by
\eqref{ellipticB} in Lemma~\ref{thm:Belliptic}, 
for $n \in \N_0$ we define a variant of elliptic binomial
coefficients, which (in view of Proposition~\ref{thm:H_binomial})
one may refer to as \textit{$H$-binomial coefficients},
by
\begin{equation}\label{eq:HBin}
\begin{bmatrix} n \\ k \end{bmatrix}_{x;a,b,c;q,p}
:=\begin{cases}
B(k, n-k), & \quad  k \in \{0, 1, \dots, n\},
\cr
0, & \quad \mbox{$k \in - \N_0$ or $k >n$}.
\end{cases}
\end{equation}
Then using the symmetry \eqref{h_symmetry}
in Lemma \ref{thm:h_symmetry}, 
the assertion of
Lemma \ref{thm:Belliptic} is rewritten as follows.

\begin{proposition}
\label{thm:H_binomial}
For $n \in \N_0$, $0 \leq k \leq n$, 
\begin{align}\label{binomialH}
\begin{bmatrix}n+1 \\ k \end{bmatrix}_{x; a,b,c;q,p}
&=
\begin{bmatrix}n \\ k-1 \end{bmatrix}_{x; a,b,c;q,p}
H_{x; a, b, c; q, p}(k-1, n+1-k)\\*\nonumber
& \quad 
+
\begin{bmatrix}n \\ k \end{bmatrix}_{x; a,b,c;q,p}
H_{x; b, a, c; q, p}(n-k, k).
\end{align}
\end{proposition}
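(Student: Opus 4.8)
The plan is to recognize \eqref{binomialH} as a mere restatement, in the notation of $H$-binomial coefficients, of the recurrence \eqref{systemB} (equivalently \eqref{system_elliptic}) satisfied by $B(k,\ell)$. First I would put $\ell:=n+1-k$ and unravel \eqref{eq:HBin}: the three $H$-binomial coefficients appearing in \eqref{binomialH} become $B(k,\ell)$, $B(k-1,\ell)$, and $B(k,\ell-1)$ (with the convention $B(-1,\ell)=0$ absorbing the vanishing first term when $k=0$), while the two accompanying factors become $H_{x;a,b,c;q,p}(k-1,\ell)$ and $H_{x;b,a,c;q,p}(\ell-1,k)$. Thus, for $1\le k\le n$, the claim \eqref{binomialH} is equivalent to
\[
B(k,\ell)=H_{x;a,b,c;q,p}(k-1,\ell)\,B(k-1,\ell)+H_{x;b,a,c;q,p}(\ell-1,k)\,B(k,\ell-1).
\]

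Next I would match the two coefficients on the right against those in \eqref{systemB} with $h=h_{x;a,b,c;q,p}$. By the very definition \eqref{H_elliptic1}, $H_{x;a,b,c;q,p}(k-1,\ell)=h_{x;a,b,c;q,p}(k-1,\ell)/h_{x;a,b,c;q,p}(k-1,0)$ is exactly the coefficient of $B(k-1,\ell)$ in \eqref{systemB}. For the coefficient of $B(k,\ell-1)$ I would invoke Lemma~\ref{thm:h_symmetry}: since $1-h_{x;a,b,c;q,p}(k,\ell-1)=h_{x;b,a,c;q,p}(\ell-1,k)$ and $1-h_{x;a,b,c;q,p}(0,\ell-1)=h_{x;b,a,c;q,p}(\ell-1,0)$, we get
\[
\frac{1-h_{x;a,b,c;q,p}(k,\ell-1)}{1-h_{x;a,b,c;q,p}(0,\ell-1)}=\frac{h_{x;b,a,c;q,p}(\ell-1,k)}{h_{x;b,a,c;q,p}(\ell-1,0)}=H_{x;b,a,c;q,p}(\ell-1,k).
\]
Hence the displayed identity is literally the recurrence in \eqref{systemB}, and it holds because, by Lemma~\ref{thm:Belliptic}, the closed form \eqref{ellipticB} solves that recurrence; the verification there rests only on the Weierstra{\ss}--Riemann addition formula \eqref{Weierstrass_Riemann} and is valid for all integers $k,\ell\ge1$, not merely for $(k,\ell)\in\Lambda_{m,n}$.

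It then remains to treat the boundary value $k=0$ separately, since \eqref{systemB} is stated only for $k\ge1$. Here the left-hand side of \eqref{binomialH} equals $\bigl[\begin{smallmatrix}n+1\\0\end{smallmatrix}\bigr]_{x;a,b,c;q,p}=B(0,n+1)=1$ (a value read off from \eqref{ellipticB} using the inversion and symmetry relations for $\theta$), the first term on the right vanishes because $\bigl[\begin{smallmatrix}n\\-1\end{smallmatrix}\bigr]_{x;a,b,c;q,p}=0$, and the second term equals $\bigl[\begin{smallmatrix}n\\0\end{smallmatrix}\bigr]_{x;a,b,c;q,p}\,H_{x;b,a,c;q,p}(n,0)=1$ since $H_{x;b,a,c;q,p}(n,0)=1$ by \eqref{H_elliptic1}. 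I do not expect any real obstacle: the argument is entirely a matter of unwinding definitions, the one genuinely used ingredient being the $a\leftrightarrow b$ symmetry of Lemma~\ref{thm:h_symmetry}. The only points deserving a word of care are this boundary case and the implicit genericity (we argue in the field $\E_{x,a,b,c;q,p}$), which guarantees that the theta quotients defining the $H$-factors are nonzero, so that condition \eqref{h_condition} is in force and the divisions above are legitimate.
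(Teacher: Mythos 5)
Your proposal is correct and matches the paper's treatment: the paper gives no separate proof, simply observing that \eqref{binomialH} is the recurrence \eqref{systemB} (equivalently \eqref{system_elliptic}) for $B(k,\ell)$ with $\ell=n+1-k$, rewritten via \eqref{eq:HBin} and \eqref{H_elliptic1}, with the second coefficient converted into $H_{x;b,a,c;q,p}(\ell-1,k)$ by the symmetry of Lemma~\ref{thm:h_symmetry}. Your extra care with the $k=0$ boundary case and with genericity is sound but does not change the argument.
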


It is obvious that the $H$-binomial coefficients
$\begin{bmatrix}n \\ k \end{bmatrix}_{x; a,b,c;q,p}$ have a
nice combinatorial interpretation in terms of weighted lattice paths.
The generating function $w_{x; a,b,c;q,p}$ with respect to
the weights $H_{x; a,b,c;q,p}$ of all paths from $(0,0)$ to
$(k,n-k)$ is clearly
\begin{equation}\label{lpvwbc}
w_{x; a,b,c;q,p}(\mathcal P((0,0)\to (k,n-k)))=
\begin{bmatrix}n \\ k \end{bmatrix}_{x; a,b,c;q,p}.
\end{equation}

We now define a new elliptic extension of the non-commutative
algebra $\C_q[X,Y]$:

\begin{definition}\label{defea2}
For two complex numbers $q$ and $p$ with $|p|<1$,
let\break $\C_{q,p}[X,Y,\E_{x,a,b,c;q,p}]$ denote
the associative unital algebra over $\C$,
generated by $X$, $Y$, and the commutative
subalgebra $\E_{x,a,b,c;q,p}$,
satisfying the following three relations:
\begin{subequations}\label{defea2eq}
\begin{align}
YX&=H_{x; a, b, c: q, p}(0, 1)\,XY\label{ell2eq}\\
Xf(x,a,b,c)&=f(x,aq,b,cq)\,X,\label{xf2}\\\label{yf2}
Yf(x,a,b,c)&=f(x,a,bq,cq)\,Y,
\end{align}
\end{subequations}
for all $f\in\mathbb E_{x,a,b,c;q,p}$.
\end{definition} 
Just as for the variables $X,Y,a,b$
forming $\C_{q,p}[X,Y,\E_{a,b;q,p}]$ in Definition~\ref{defea},
we refer to the variables $X,Y,x,a,b,c$ forming
$\C_{q,p}[X,Y,\E_{x,a,b,c;q,p}]$
in Definition~\ref{defea2}
as {\em elliptic-commuting} variables.

We note the following useful relations that follow from
\eqref{h_elliptic1} and \eqref{defea2eq}:
\begin{subequations}
\begin{align}
  X\,h_{x;a,b,c;q,p}(i,j)^{\pm1}
  &= h_{x;a,b,c;q,p}(i+1,j)^{\pm1}\,X,\\
  X\,h_{x;b,a,c;q,p}(j,i)^{\pm1}
  &=h_{x;b,a,c;q,p}(j,i+1)^{\pm1}\,X,\\
Y\,h_{x;a,b,c;q,p}(i,j)^{\pm1}&=h_{x;a,b,c;q,p}(i,j+1)^{\pm1}\,Y,\\
Y\,h_{x;b,a,c;q,p}(j,i)^{\pm1}&=h_{x;b,a,c;q,p}(j+1,i)^{\pm1}\,Y,
\end{align}
\end{subequations}
for all $(i,j)\in\N_0^2$.
Further, by induction on $r$ and $s$, one has
\begin{equation}\label{eq:YsXr}
Y^sX^r=\Big(\prod_{i=0}^{r-1}H_{x;a,b,c;q,p}(i,s)\Big)X^rY^s,
\end{equation}
for all $r, s \in \N_0$.

Similarly as in $\C_{q,p}[X,Y,\E_{a,b;q,p}]$, the monomials $X_kY^l$
form a basis for the algebra $\C_{q,p}[X,Y,\E_{x,a,b,c;q,p}]$, now as a
left module over $\E_{x,a,b,c;q,p}$. That is, any element can be
written uniquely as a finite sum
$\sum_{k,l\ge 0} f_{kl}X^kY^l$ with $f_{kl}\in \E_{x,a,b,c;q,p}$,
the normal form of the element.

The following non-commutative elliptic binomial theorem
which readily follows from \cite[Theorem~3]{Sch20}
(and could be independently proved by induction)
shows that the normal form of the binomial
$\big(X+ h_{x;b,a,c;q,p}(0,0)\,Y\big)^n$ is nice;
each (left) coefficient of $X^kY^{n-k}$ completely
factorizes in $\E_{x,a,b,c;q,p}$.

\begin{theorem}[Binomial theorem for elliptic commuting
  variables in\break \mbox{$\C_{q,p}[X,Y,\E_{x,a,b,c;q,p}]$}]\label{vwdbinth}
  Let $n\in\N_0$. Then, as an identity in
  $\C_{q,p}[X,Y,\E_{x,a,b,c;q,p}]$, we have
\begin{align}\label{vwdbinthid}
 &\big(X+ h_{x;b,a,c;q,p}(0,0)\,Y\big)^n\\*\nonumber
&  =\sum_{k=0}^n
\begin{bmatrix}n \\ k \end{bmatrix}_{x; a,b,c;q,p}\!
\bigg(\prod_{j=0}^{n-k-1} h_{x;b,a,c;q,p}(j,0)\bigg)
X^kY^{n-k}.
\end{align}
\end{theorem}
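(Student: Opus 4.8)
The plan is to prove Theorem~\ref{vwdbinth} by induction on $n$, exactly mirroring the recursion for the $H$-binomial coefficients established in Proposition~\ref{thm:H_binomial}. The base case $n=0$ is trivial since both sides equal $1$ (the empty product conventions give $\left[\begin{smallmatrix}0\\0\end{smallmatrix}\right]_{x;a,b,c;q,p}=B(0,0)=1$ and an empty product of $h$-factors). For the inductive step, I would write $(X+h_{x;b,a,c;q,p}(0,0)Y)^{n+1}=(X+h_{x;b,a,c;q,p}(0,0)Y)^n\cdot(X+h_{x;b,a,c;q,p}(0,0)Y)$, apply the induction hypothesis to the left factor, and then push the trailing $X$ and $Y$ to the right through the coefficients using the commutation relations \eqref{xf2}, \eqref{yf2} together with the derived relations on the $h$-factors. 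Collecting the coefficient of $X^kY^{n+1-k}$ and matching it against Proposition~\ref{thm:H_binomial} would close the induction.

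Concretely, after applying the induction hypothesis the right factor contributes two terms. In the term ending in $X$, the monomial $X^jY^{n-j}$ becomes $X^jY^{n-j}X$; using \eqref{eq:YsXr} this produces $X^{j+1}Y^{n-j}$ times a product of $H$-weights, and after reindexing $k=j+1$ this feeds into the first summand of \eqref{binomialH} with argument $H_{x;a,b,c;q,p}(k-1,n+1-k)$. In the term ending in $h_{x;b,a,c;q,p}(0,0)Y$, one pushes $Y$ past the coefficient, which shifts the arguments of all the accumulated $h$-factors and of the $H$-binomial coefficient; the constant $h_{x;b,a,c;q,p}(0,0)$ combines with the shifted product $\prod_{j}h_{x;b,a,c;q,p}(j,0)$ in just the right way so that, after invoking \eqref{lpvwbc} (the lattice-path interpretation) or directly Proposition~\ref{thm:H_binomial}, the coefficient of $X^kY^{n+1-k}$ is $\left[\begin{smallmatrix}n\\k\end{smallmatrix}\right]_{x;a,b,c;q,p}H_{x;b,a,c;q,p}(n-k,k)$ times the correct product of $h$-factors, which is the second summand of \eqref{binomialH}.

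Alternatively, and perhaps more cleanly, I would deduce the theorem directly from \cite[Theorem~3]{Sch20} as hinted in the text: one identifies the algebra $\C_{q,p}[X,Y,\E_{x,a,b,c;q,p}]$ with an instance of the algebra in that reference under an explicit substitution of parameters (reading off the commutation constant $H_{x;a,b,c;q,p}(0,1)$ and the parameter shifts in \eqref{xf2}, \eqref{yf2}), and then checks that the elliptic weight function appearing there specializes to $H_{x;a,b,c;q,p}$ and that the stated coefficients $\left[\begin{smallmatrix}n\\k\end{smallmatrix}\right]_{x;a,b,c;q,p}\prod_{j=0}^{n-k-1}h_{x;b,a,c;q,p}(j,0)$ match. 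This is really a verification that our weighted lattice path model is an instance of the one in \cite{Sch07,Sch20}, which is plausible given that Proposition~\ref{thm:H_binomial} has exactly the shape of the recursion \eqref{rec}.

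The main obstacle, in either route, is bookkeeping the argument shifts: the variables $a,b,c$ all shift under $X$ and $Y$ (by $q$ in different combinations), so every factor $h_{x;a,b,c;q,p}(i,j)$ and $h_{x;b,a,c;q,p}(j,i)$ and the $H$-binomial coefficient moves as it is transported past a generator, and one must verify that the cumulative shift of the accumulated product $\prod_j h_{x;b,a,c;q,p}(j,0)$ together with the prefactor $h_{x;b,a,c;q,p}(0,0)$ reproduces exactly $\prod_{j=0}^{n-k-1}h_{x;b,a,c;q,p}(j,0)$ on the nose. This is a purely mechanical but error-prone check; the conceptual content is entirely contained in Proposition~\ref{thm:H_binomial}, so once the shift-bookkeeping is set up correctly the induction goes through without any further use of the Weierstra{\ss}--Riemann addition formula.
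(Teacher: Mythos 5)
Your proposal is correct. Note that the paper itself does not write out a proof of Theorem~\ref{vwdbinth}: it merely states that the result ``readily follows from \cite[Theorem~3]{Sch20}'' and remarks parenthetically that it could be proved independently by induction. Your second route is therefore exactly the paper's route, while your first route (induction on $n$ via Proposition~\ref{thm:H_binomial}) is the alternative the paper mentions but does not carry out. The shift bookkeeping you flag as the main risk does close up: in the term ending in $X$, relation \eqref{eq:YsXr} with $r=1$ produces precisely the factor $H_{x;a,b,c;q,p}(k-1,n+1-k)$ appearing in the first summand of \eqref{binomialH}; in the term ending in $h_{x;b,a,c;q,p}(0,0)\,Y$, transporting $h_{x;b,a,c;q,p}(0,0)$ to the left past $X^{k}Y^{n-k}$ shifts $(a,b,c)\mapsto(aq^{k},bq^{n-k},cq^{n})$ and hence turns it into $h_{x;b,a,c;q,p}(n-k,k)=H_{x;b,a,c;q,p}(n-k,k)\,h_{x;b,a,c;q,p}(n-k,0)$, and the extra factor $h_{x;b,a,c;q,p}(n-k,0)$ is exactly what lengthens $\prod_{j=0}^{n-k-1}h_{x;b,a,c;q,p}(j,0)$ to $\prod_{j=0}^{n-k}h_{x;b,a,c;q,p}(j,0)$, so the coefficient of $X^{k}Y^{n+1-k}$ is the sum of the two summands of \eqref{binomialH} times the correct product, and no further use of the Weierstra{\ss}--Riemann formula is needed (it is already encapsulated in Lemma~\ref{thm:Belliptic}). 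The only point worth adding explicitly is the treatment of the boundary cases $k=0$ and $k=n+1$, which the conventions in \eqref{eq:HBin} handle. What your induction buys is a self-contained argument resting only on Proposition~\ref{thm:H_binomial}; what the paper's citation route buys is brevity, at the cost of verifying that the present weights form an admissible instance of the weight-dependent binomial theorem of \cite{Sch20}.
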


By convolution and comparison of coefficients
we obtain the following identity:
\begin{corollary}[An elliptic binomial convolution formula]\label{corftbinconv}
Let $n,m,k\in\N_0$ and $x,a,b,c,q,p\in\C$ with $|p|<1$.
Then there holds the following convolution
formula:
\begin{align}\label{corftbinconveq}
  &\begin{bmatrix}n+m\\k\end{bmatrix}_{x;a,b,c;q,p}
  \bigg(\prod_{j=0}^{n+m-k-1} h_{x;b,a,c;q,p}(j,0)\bigg)\\\nonumber
 & =
  \sum_{j=0}^k\Bigg(\begin{bmatrix}n\\j\end{bmatrix}_{x;a,b,c;q,p}
  \begin{bmatrix}m\\k-j\end{bmatrix}_{x;aq^j,bq^{n-j},cq^n;q,p}
  \bigg(\prod_{\ell=0}^{n-j-1} h_{x;b,a,c;q,p}(\ell,0)\bigg)\\*
  \nonumber
&\qquad\quad\times
  \bigg(\prod_{s=0}^{m+j-k-1} h_{x;b,a,c;q,p}(s+n-j,j)\bigg)
  \bigg(\prod_{i=0}^{k-j-1}H_{x;a,b,c;q,p}(i+j,n-j)\bigg)\Bigg).
\end{align}
\end{corollary}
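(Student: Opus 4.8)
The plan is to derive \eqref{corftbinconveq} from the non-commutative elliptic binomial theorem, Theorem~\ref{vwdbinth}, by a standard convolution argument: writing $(X+h_{x;b,a,c;q,p}(0,0)Y)^{n+m}$ as the product $(X+h_{x;b,a,c;q,p}(0,0)Y)^{n}\cdot(X+h_{x;b,a,c;q,p}(0,0)Y)^{m}$, expanding each factor by \eqref{vwdbinthid}, bringing the result back into normal form $\sum f_{kl}X^kY^l$ using the commutation relations \eqref{defea2eq} (in particular \eqref{eq:YsXr}, which records how $Y^s$ passes to the right of $X^r$, and \eqref{xf2}--\eqref{yf2}, which record how $X$ and $Y$ act on functions of $x,a,b,c$), and then comparing the (left) coefficients of $X^kY^{n+m-k}$ against the normal form obtained by applying \eqref{vwdbinthid} directly to $(X+h_{x;b,a,c;q,p}(0,0)Y)^{n+m}$.

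Concretely, first I would apply \eqref{vwdbinthid} to the right-hand factor of exponent $m$, producing $\sum_{i=0}^{m}\left[\begin{smallmatrix}m\\i\end{smallmatrix}\right]_{x;a,b,c;q,p}\big(\prod_{s=0}^{m-i-1}h_{x;b,a,c;q,p}(s,0)\big)X^iY^{m-i}$. Then I would apply \eqref{vwdbinthid} to the left-hand factor of exponent $n$, producing the analogous sum over $j$. Multiplying the two sums, a generic term is a product of the form $\big(\text{coeff}_j\big)X^jY^{n-j}\big(\text{coeff}_i\big)X^iY^{m-i}$. To reach normal form one first slides the scalar $\text{coeff}_i$ (an element of $\E_{x,a,b,c;q,p}$) leftward past $Y^{n-j}$, which by \eqref{yf2} replaces $(a,b,c)$ by $(a,bq^{n-j},cq^{n-j})$ inside it; but since that coefficient already sits to the right of $X^j$, one must also account for the earlier passage of $X^j$, which by \eqref{xf2} shifts $(a,b,c)\mapsto(aq^{j},b,cq^{j})$; together these produce the index shift $\left[\begin{smallmatrix}m\\k-j\end{smallmatrix}\right]_{x;aq^{j},bq^{n-j},cq^{n};q,p}$ seen on the right-hand side of \eqref{corftbinconveq}, with $k-j$ playing the role of $i$. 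Next one must move $Y^{n-j}$ past $X^{i}=X^{k-j}$; by \eqref{eq:YsXr} this costs the factor $\prod_{\ell=0}^{k-j-1}H_{x;a',b',c';q,p}(\ell,n-j)$ evaluated at whatever shifted parameters $X^j$ has already introduced, i.e.\ at $(aq^{j},\dots)$, giving $\prod_{i=0}^{k-j-1}H_{x;a,b,c;q,p}(i+j,n-j)$. The remaining products of $h$'s — namely $\prod_{\ell=0}^{n-j-1}h_{x;b,a,c;q,p}(\ell,0)$ from the degree-$n$ expansion and $\prod_{s=0}^{m+j-k-1}h_{x;b,a,c;q,p}(s+n-j,j)$, the latter arising because the factor $\prod_{s=0}^{m-i-1}h_{x;b,a,c;q,p}(s,0)$ from the degree-$m$ expansion gets pushed left past $X^jY^{n-j}$ and reindexed via \eqref{xf2}--\eqref{yf2} — combine to account precisely for the two $h$-products appearing in \eqref{corftbinconveq}. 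Collecting all terms with $i+j=k$ and equating with the left-hand side, where \eqref{vwdbinthid} applied to exponent $n+m$ contributes $\left[\begin{smallmatrix}n+m\\k\end{smallmatrix}\right]_{x;a,b,c;q,p}\prod_{j=0}^{n+m-k-1}h_{x;b,a,c;q,p}(j,0)$, yields \eqref{corftbinconveq}.

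The main obstacle will be purely bookkeeping: correctly tracking the cumulative parameter shifts $(x,a,b,c)\mapsto(x,aq^{\bullet},bq^{\bullet},cq^{\bullet})$ as each scalar coefficient migrates leftward past the various $X$'s and $Y$'s, and verifying that the four separate $q$-shifted-factorial/theta-quotient products that emerge telescope into exactly the two $h$-products and one $H$-product displayed on the right-hand side of \eqref{corftbinconveq}. There are no conceptual difficulties — uniqueness of normal form (the monomials $X^kY^\ell$ form a basis over $\E_{x,a,b,c;q,p}$) guarantees that comparison of coefficients is legitimate — but one must be careful that the shift in \eqref{xf2} is $(a,b,c)\mapsto(aq,b,cq)$ while that in \eqref{yf2} is $(a,b,c)\mapsto(a,bq,cq)$, so that passing a scalar left through $X^jY^{n-j}$ sends $(a,b,c)\mapsto(aq^{j},bq^{n-j},cq^{n})$, which is exactly the specialization appearing in the second binomial coefficient on the right-hand side. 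A useful sanity check along the way is the limit $p\to 0$, $a\to 0$, $b\to 0$ reducing \eqref{corftbinconveq} to the classical $q$-Vandermonde-type convolution for $q$-binomial coefficients, and a further $q\to 1$ reduction to the Vandermonde identity $\binom{n+m}{k}=\sum_j\binom{n}{j}\binom{m}{k-j}$.
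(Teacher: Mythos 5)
Your proposal is correct and follows essentially the same route as the paper: the paper likewise expands $\big(X+h_{x;b,a,c;q,p}(0,0)\,Y\big)^{n+m}$ once directly and once as the product of the degree-$n$ and degree-$m$ factors via Theorem~\ref{vwdbinth}, pushes the scalar coefficients leftward using \eqref{xf2}--\eqref{yf2} (producing exactly the shift $(a,b,c)\mapsto(aq^{j},bq^{n-j},cq^{n})$ you identify), normalizes $X^jY^{n-j}X^rY^{m-r}$ via \eqref{eq:YsXr}, and equates left coefficients of $X^kY^{n+m-k}$. Your bookkeeping of the parameter shifts and of the reindexed $h$- and $H$-products matches the paper's computation.
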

\begin{proof}
  Working in $\C_{q,p}[X,Y,\E_{x,a,b,c;q,p}]$, one expands the
  binomial $(X+ h_{x;b,a,c;q,p}(0,0)\,Y)^{n+m}$
  in two different ways and suitably extracts coefficients.
On the one hand,
\begin{align}\label{eqwbinth1}
  &\big(X+h_{x;b,a,c;q,p}(0,0)\,Y\big)^{n+m}\\\nonumber
  &=\sum_{k=0}^{n+m}\begin{bmatrix}n+m\\k\end{bmatrix}_{x;a,b,c;q,p}
  \bigg(\prod_{j=0}^{n+m-1}h_{x;b,a,c;q,p}(j,0)\bigg)
X^kY^{n+m-k}.
\end{align}
On the other hand,
\begin{align}\label{eqwbinth2}
  &\big(X+h_{x;b,a,c;q,p}(0,0)\,Y\big)^{n+m}\\\nonumber
 & =\big(X+h_{x;b,a,c;q,p}(0,0)\,Y\big)^n\,
  \big(X+h_{x;b,a,c;q,p}(0,0)\,Y\big)^m\\*\notag
  &=\sum_{j=0}^n\sum_{r=0}^m\Bigg(
\begin{bmatrix}n\\j\end{bmatrix}_{x;a,b,c;q,p}
  \bigg(\prod_{\ell=0}^{n-j-1}h_{x;b,a,c;q,p}(\ell,0)\bigg)
  X^jY^{n-j}\\*\nonumber
  &\qquad\qquad\quad\times
    \begin{bmatrix}m\\r\end{bmatrix}_{x;a,b,c;q,p}
  \bigg(\prod_{s=0}^{m-r-1}h_{x;b,a,c;q,p}(s,0)\bigg)
  X^rY^{m-r}\Bigg)\\*\nonumber
  &=\sum_{j=0}^n\sum_{r=0}^m\Bigg(
\begin{bmatrix}n\\j\end{bmatrix}_{x;a,b,c;q,p}
    \begin{bmatrix}m\\r\end{bmatrix}_{x;aq^j,bq^{n-j},cq^n;q,p}
 \bigg(\prod_{\ell=0}^{n-j-1}h_{x;b,a,c;q,p}(\ell,0)\bigg)\\*\nonumber
  & \qquad\qquad\quad\times\bigg(\prod_{s=0}^{m-r-1}
    h_{x;b,a,c;q,p}(s+n-j,j)\bigg)
  X^jY^{n-j}X^rY^{m-r}\Bigg).
\end{align}
Now use \eqref{eq:YsXr} to apply
\begin{equation*}
X^jY^{n-j}X^rY^{m-r}
=\bigg(\prod_{i=0}^{r-1}H_{x;a,b,c;q,p}(i+j,n-j)\bigg)x^{j+r}y^{n+m-j-r}\quad
\text{for\/ $n\ge j$},
\end{equation*}
and extract and equate (left) coefficients of
$X^kY^{n+m-k}$ in \eqref{eqwbinth1} and \eqref{eqwbinth2}.
This gives the convolution formula \eqref{corftbinconveq}.
\end{proof}

\begin{remark}
Corollary~\ref{corftbinconv} is not a new result, but actually a
special case of the Frenkel--Turaev sum in \eqref{propfteq}.
Writing out all the expressions in \eqref{corftbinconveq} in
explicit terms using \eqref{h_elliptic1}, \eqref{ellipticB},
\eqref{H_elliptic1}, and \eqref{eq:HBin}, and applying some
elementary manipulations
(such as those appearing in \cite[p.~310]{GR04})
of the theta-shifted factorials, it becomes clear
that, up to a multiplicative factor that can be pulled out the sum,
the sum on the right-hand side of \eqref{corftbinconveq}
is indeed the  Frenkel--Turaev sum in \eqref{propfteq}
with respect to the substitutions
$$(a,b,c,d,e,n)\mapsto
(aq^{-n}/b,acq^{n+m},q^{1-n}/bc,aq^{k-n-m}/b,q^{-n},k).$$
(In particular, the factors depending on $x$ can all be pulled out
of the sum and can be canceled with those appearing on the
left-hand side. Also, the variable $b$ in the sum is redundant.)
By analytic continuation, the integers $n$
and $m$ can be replaced by continuous variables (say $\log_q\nu$
and $\log_q\mu$) and one recovers the full Frenkel--Tureav sum,
without the restriction of having parameters that are integer
powers of $q$.
\end{remark}

Finally, it is again straightforward to use the lattice path model
to derive a homogeneous Chaundy--Bullard identity
for the the elliptic commuting variables forming
the algebra $\C_{q,p}[X,Y,\E_{x,a,b,c;q,p}]$. (We omit the details;
it is also principle possible to verify the identity by induction.)
The result is as follows:
\begin{theorem}[Homogeneous Chaundy--Bullard identity for elliptic commuting
  variables in \mbox{$\C_{q,p}[X,Y,\E_{x,a,b,c;q,p}]$}]\label{ecbthm2}
  Let $n,m\in\N_0$.
Then the following identity is valid in $\C_{q,p}[X,Y,\E_{x,a,b,c;q,p}]$:
\begin{align}\label{eq:ecbthm2}
  &\big(X+h_{x;b,a,c;q,p}(0,0)\,Y\big)^{n+m+1}\\*\nonumber
  &=\sum_{k=0}^m\Bigg(\begin{bmatrix}n+k\\k\end{bmatrix}_{x;a,b,c;q,p}
  \bigg(\prod_{j=0}^{n-1} h_{x;b,a,c;q,p}(j,0)\bigg)\\*\nonumber
  &\qquad\qquad\times h_{x;b,a,c;q,p}(n,k)\,  X^kY^{n+1}
    \big(X+h_{x;b,a,c;q,p}(0,0)\,Y\big)^{m-k}\Bigg)\\*\nonumber
  &\quad+\sum_{k=0}^n\Bigg(\begin{bmatrix}m+k\\m\end{bmatrix}_{x;a,b,c;q,p}
  \bigg(\prod_{j=0}^{k-1} h_{x;b,a,c;q,p}(j,0)\bigg)\\*\nonumber
  &\qquad\qquad\;\times  
  X^{m+1}Y^k\big(X+h_{x;b,a,c;q,p}(0,0)\,Y\big)^{n-k}\Bigg).
\end{align}
\end{theorem}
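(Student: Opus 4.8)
The plan is to run the weighted lattice path argument of Section~\ref{sec:proof}, but now with steps weighted by elements of the noncommutative algebra $\C_{q,p}[X,Y,\E_{x,a,b,c;q,p}]$ rather than by scalars. Concretely, I would attach to each east step $(i,j)\to(i+1,j)$ a weight built from $X$ (together with the appropriate shift in $\E_{x,a,b,c;q,p}$) and to each north step $(i,j)\to(i,j+1)$ a weight built from $Y$, chosen so that the commutation relations \eqref{defea2eq} and the derived relation \eqref{eq:YsXr} reproduce exactly the $H$-weights $H_{x;a,b,c;q,p}$ when steps are collected into normal form. The key point is that the letter $X+h_{x;b,a,c;q,p}(0,0)\,Y$ records, at lattice point $(i,j)$, the sum of the two outgoing step weights: since $1-h_{x;a,b,c;q,p}(i,j)=h_{x;b,a,c;q,p}(j,i)$ by Lemma~\ref{thm:h_symmetry}, the relative weighting of east versus north step at $(0,0)$ is $1:h_{x;b,a,c;q,p}(0,0)$, and the shift relations \eqref{xf2}, \eqref{yf2} propagate this correctly to all $(i,j)$. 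Thus $\big(X+h_{x;b,a,c;q,p}(0,0)\,Y\big)^{n+m+1}$ is the generating function, in this algebra, of all length-$(n+m+1)$ lattice paths starting at the origin.

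\textbf{Key steps.} First I would set up the path/weight dictionary precisely, verifying via \eqref{eq:YsXr} that the contribution of a monomial path from $(0,0)$ to $(k,\ell)$, brought to normal form $f_{k\ell}X^kY^\ell$, has coefficient $f_{k\ell}$ equal to the generating function $\big[\begin{smallmatrix}k+\ell\\k\end{smallmatrix}\big]_{x;a,b,c;q,p}$ times the accumulated prefactor of $h_{x;b,a,c;q,p}$-weights coming from the north steps — this is exactly the content of Theorem~\ref{vwdbinth}, which I am allowed to invoke. Second, I would classify every path of length $n+m+1$ from the origin by the location of the \emph{first} step that leaves the region $\{i\le m\}\cup\{j\le n\}$, i.e.\ the step that is either $(k,n)\to(k,n+1)$ for some $k\in\{0,\dots,m\}$ or $(m,\ell)\to(m+1,\ell)$ for some $\ell\in\{0,\dots,n\}$; every maximal-length path crosses exactly one of these boundary edges first, so this is a genuine partition of the path set. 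Third, I would sum the generating-function contributions over each of these two families: the paths through $(k,n)\to(k,n+1)$ contribute a factor $\big[\begin{smallmatrix}n+k\\k\end{smallmatrix}\big]_{x;a,b,c;q,p}$ for the initial segment to $(k,n)$, then the north step weight $h_{x;b,a,c;q,p}(n,k)\,Y$ (with its shifts), then $X\cdot(\text{anything})$ packaged as $\big(X+h_{x;b,a,c;q,p}(0,0)\,Y\big)^{m-k}$ for the remainder; analogously for the paths through $(m,\ell)\to(m+1,\ell)$, where the east-then-anything tail has length $n-\ell$. Collecting the prefactors of $h_{x;b,a,c;q,p}$-weights along the pure-north (respectively pure-east) boundary segments produces exactly the products $\prod_{j=0}^{n-1}h_{x;b,a,c;q,p}(j,0)$ and $\prod_{j=0}^{k-1}h_{x;b,a,c;q,p}(j,0)$ appearing in \eqref{eq:ecbthm2}. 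Equating this two-term sum with the single expansion $\big(X+h_{x;b,a,c;q,p}(0,0)\,Y\big)^{n+m+1}$ gives the identity.

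\textbf{Main obstacle.} The genuine bookkeeping difficulty — and the step most likely to hide a sign or shift error — is tracking the $\E_{x,a,b,c;q,p}$-valued prefactors through the noncommutative rearrangements: moving every $X$ to the left past $Y$'s picks up the $H$-weights via \eqref{eq:YsXr}, moving the elliptic coefficients past $X$ and $Y$ shifts $a,b,c$ by powers of $q$ according to \eqref{xf2}–\eqref{yf2}, and one must check that after the initial boundary segment these shifted coefficients are precisely those that make $\big(X+h_{x;b,a,c;q,p}(0,0)\,Y\big)^{m-k}$ (resp.\ $^{n-\ell}$) the correct generating function for the tail \emph{in the shifted algebra}. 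The cleanest way to dispatch this is to note that the shift structure in Definition~\ref{defea2} is designed so that truncating a path and restarting at $(k,n)$ or $(m,\ell)$ just replaces $(a,b,c)$ by $(aq^{k},bq^{n},cq^{n+k})$, and the $H$-binomial recursion in Proposition~\ref{thm:H_binomial} together with Theorem~\ref{vwdbinth} already encodes exactly this compatibility; so the argument reduces to assembling pieces each of which is justified by a result stated earlier, plus one routine induction on $m+n$ to handle the telescoping of boundary weights. As remarked in the statement, one could alternatively bypass the lattice path language entirely and prove \eqref{eq:ecbthm2} directly by induction on $m+n$, peeling off one factor of $\big(X+h_{x;b,a,c;q,p}(0,0)\,Y\big)$ and using \eqref{binomialH}; the combinatorial route is more transparent, but the inductive route is the safer fallback if the prefactor bookkeeping proves too delicate to present cleanly.
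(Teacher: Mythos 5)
Your overall strategy --- read $\big(X+h_{x;b,a,c;q,p}(0,0)\,Y\big)^{m+n+1}$ as the generating function of all words (equivalently, lattice paths) of length $m+n+1$, partition the words according to the first letter that crosses the line $j=n+1$ or the line $i=m+1$, and evaluate each prefix sum by Theorem~\ref{vwdbinth} --- is exactly the argument the paper has in mind and omits, and it does produce the first sum of \eqref{eq:ecbthm2} correctly: the prefix sum over paths ending at $(k,n)$ is $\left[\begin{smallmatrix}n+k\\k\end{smallmatrix}\right]_{x;a,b,c;q,p}\prod_{j=0}^{n-1}h_{x;b,a,c;q,p}(j,0)\,X^kY^n$, and pushing the crossing letter $h_{x;b,a,c;q,p}(0,0)\,Y$ through $X^kY^n$ via \eqref{xf2}--\eqref{yf2} yields precisely the extra factor $h_{x;b,a,c;q,p}(n,k)$. (Minor slip: after that crossing north step the tail is an arbitrary word of length $m-k$, not ``$X\cdot(\text{anything})$''.)

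The genuine gap is in the second family, and it is exactly the bookkeeping you flag as the ``main obstacle'' but do not carry out. There the crossing letter is $X$, so your decomposition produces terms
$\left[\begin{smallmatrix}m+k\\m\end{smallmatrix}\right]_{x;a,b,c;q,p}\prod_{j=0}^{k-1}h_{x;b,a,c;q,p}(j,0)\,X^mY^k\,X\,\big(X+h_{x;b,a,c;q,p}(0,0)Y\big)^{n-k}$,
and to reach the shape $X^{m+1}Y^k(\cdots)$ of \eqref{eq:ecbthm2} you must commute the trailing $X$ leftward: by \eqref{eq:YsXr} and \eqref{xf2}, $X^mY^kX = H_{x;a,b,c;q,p}(m,k)\,X^{m+1}Y^k$. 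Your sketch never accounts for this factor $H_{x;a,b,c;q,p}(m,k)$ --- the analogue of the factor $W_{a,b;q,p}(m+1,k)$ that \emph{does} appear in the second sum of Theorem~\ref{ecbthm} --- and it does not cancel against anything: already for $m=0$, $n=1$ the $k=1$ term of your decomposition is $h_{x;b,a,c;q,p}(0,0)\,YX = h_{x;b,a,c;q,p}(0,0)\,H_{x;a,b,c;q,p}(0,1)\,XY$ by \eqref{ell2eq}, which differs from the corresponding term written in \eqref{eq:ecbthm2} by the nontrivial factor $H_{x;a,b,c;q,p}(0,1)$. So you must either carry $H_{x;a,b,c;q,p}(m,k)$ explicitly through the second sum, or keep the crossing step in place and write those terms as $\cdots X^mY^kX(\cdots)$, and in either case reconcile the result with the displayed statement. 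As written, the assertion that ``collecting the prefactors produces exactly the products appearing in \eqref{eq:ecbthm2}'' is the one step that is claimed rather than checked, and it is precisely where the discrepancy surfaces.
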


\section*{Acknowledgements}
The authors wish to thank the referee for their helpful remarks.


\bibliographystyle{amsalpha}


\end{document}